\newtheorem{theorem}{Theorem}[section]
\newtheorem{corollary}[theorem]{Corollary}
\newtheorem{definition}[theorem]{Definition}
\newtheorem{example}[theorem]{Example}
\newtheorem{lemma}[theorem]{Lemma}
\newenvironment{proof}[1][Proof]{\textbf{#1.} }{\ \rule{0.5em}{0.5em}}
\renewcommand{\text}{\mbox}
\newcommand{\dom}{\mathop{\rm Dom}}
\newcommand{\graph}{\mathop{\rm Graph}}
\newcommand{\antigraph}{\mathop{\rm Antigraph}}
\newcommand{\R}{\mathbf{R}}
\newcommand{\Z}{\mathbf{Z}}
\begin{document}
\baselineskip20pt

\author{Najma Ahmad\thanks{%
Department of Mathematics, University of Toronto, Toronto Ontario M5S 2E4 Canada,
\texttt{ahmad@math.toronto.edu};
{\em Current address:
Ernst and Young, Toronto Ontario Canada} \texttt{najma.ahmad@gmail.com}},
Hwa Kil Kim\thanks{%
School of Mathematics, Georgia Institute of Technology, Atlanta GA 30332-10160 USA \texttt{hwakil@math.gatech.edu};
{\em Current address: Courant Institute for the Mathematical Sciences,
New York University, New York, NY 10012 USA} {\tt hwakil@gmail.com}},
and Robert J. McCann\thanks{%
Department of Mathematics, University of Toronto, Toronto Ontario M5S 2E4 Canada, \texttt{%
mccann@math.toronto.edu}}}

\title{Optimal transportation, topology and uniqueness\thanks{
Formerly titled {\em Extremal doubly stochastic measures and optimal transportation}.}
\thanks{It is a pleasure to thank Nassif Ghoussoub and Herbert Kellerer,  who provided early
encouragement in this direction,  and Pierre-Andre Chiappori, Ivar Ekeland, and Lars Nesheim,
whose interest in economic applications fortified our resolve to persist.
We thank Wilfrid Gangbo, Jonathan Korman, and Robert Pego
for fruitful discussions, Nathan Killoran for
useful references, and programs of the Banff International Research Station (2003) and
Mathematical Sciences Research Institute in Berkeley (2005) for stimulating these
developments by bringing us together.
The authors are pleased to acknowledge the support of
Natural Sciences and Engineering Research Council of Canada Grants 217006-03 and -08
and United States National Science Foundation Grant DMS-0354729. \copyright 2009 by the authors.}}
\date{\today}

\begin{abstract}
The Monge-Kantorovich transportation problem involves optimizing
with respect to a given a cost function.
Uniqueness is a fundamental open question
about which little is known when the cost function is smooth and
the landscapes containing the goods to be transported possess (non-trivial) topology.
This question turns out to be closely linked to a delicate problem
(\# 111) of Birkhoff \cite{Birkhoff48}:
give a necessary and sufficient condition
on the support of a joint probability to guarantee extremality among all
measures which share its marginals.  Fifty years of progress on Birkhoff's question
culminate in Hestir and Williams'
necessary condition  which is
nearly sufficient for extremality; we relax their subtle
measurability hypotheses separating necessity from sufficiency slightly,
yet demonstrate by example that to be sufficient certainly requires some measurability.
Their condition amounts to the vanishing
of the measure $\gamma$ outside a countable alternating sequence of graphs and antigraphs
in which no two graphs (or two antigraphs) have domains that overlap, and
where the domain of each graph / antigraph in the sequence contains the range
of the succeeding antigraph (respectively, graph).  Such sequences are called
{\em numbered limb systems}. We then explain how this characterization can be used to
resolve the uniqueness of Kantorovich solutions for optimal transportation on a manifold
with the topology of the sphere.

\end{abstract}

\maketitle

\section{Introduction}

This survey weaves together two themes:  the first is Monge's 1781
problem \cite{Monge81} of transporting mass
from a landscape $X$ to a landscape $Y$ so as to minimize the average cost $c(x,y)$
per unit transported;
the second is Birkhoff's 1948 problem  \cite{Birkhoff48}
of characterizing extremality among doubly stochastic measures on the unit square.

The first problem has become classical in the calculus of variations; it has
deep connections to analysis
\cite{Trudinger94} \cite{McCann94} \cite{McCann97}
\cite{Cordero-ErausquinNazaretVillani04} \cite{FigalliMaggiPratelli10p},
geometry  \cite{OttoVillani00} \cite{CorderoMcCannSchmuckenschlager01} \cite{Sturm06ab}
\cite{LottVillani09} \cite{Lott09}  \cite{Villani09} \cite{McCannTopping10} \cite{KimMcCannWarren09p},
dynamics \cite{AmbrosioGigliSavare05} \cite{BernardBuffoni07} \cite{JordanKinderlehrerOtto98} \cite{Otto01}
and nonlinear partial differential equations
\cite{Brenier91} \cite{Caffarelli92} \cite{Caffarelli96b} \cite{Delanoe91} \cite{EvansGangbo99} \cite{MaTrudingerWang05} \cite{Urbas97},
as well as applications in physics \cite{Dobrushin70} \cite{Tanaka73} \cite{McCann98},
statistics \cite{RachevRuschendorf98},
engineering \cite{BouchitteButtazzoSeppecher97} \cite{BouchitteGangboSeppecher08} \cite{GlimmOliker03} \cite{Plakhov04b} \cite{Wang04},
atmospheric modeling \cite{CullenPurser84}  \cite{PurserCullen87} \cite{CullenPurser89} \cite{Cullen06},
and economics \cite{Carlier01} \cite{CarlierEkeland04} \cite{Ekeland10} \cite{ChiapporiMcCannNesheim10}
\cite{FigalliKimMcCann-econ}.
The second is a problem in functional analysis,  at the junction between
measure theory and convex geometry.  It is not evident that either involves
differential topology.

The two problems are linked by Kantorovich's reformulation of Monge's nonlinear minimization
as an (infinite-dimensional) linear program \cite{Kantorovich42} \cite{Kantorovich48}.
In this framework, existence of solutions became straightforward for any
continuous cost $c\in C(X \times Y)$.
Still, fifty more years would elapse before
the optimal volume-preserving map between two arbitrary domains
sought by Monge was constructed
for the Euclidean distance $c(x,y)=|x-y|$ in \cite{Ambrosio03}
\cite{CaffarelliFeldmanMcCann00} and \cite{TrudingerWang01}.
Evans and Gangbo had already solved the analogous problem with the domains replaced
by disjoint Lipschitz continuous probability densities \cite{EvansGangbo99},
while Sudakov's earlier construction
\cite{Sudakov76} required a claim which turned out to be
true only two in dimensions \cite{Ambrosio03} \cite{BianchiniCavalletti10p};
see \cite{Caravenna09p} \cite{ChampionDePascale09p} \cite{BianchiniCavalletti10p}
for simplifications
and \cite{FeldmanMcCann02r} \cite{AmbrosioKirchheimPratelli04} \cite{BernardBuffoni06}
\cite{FigalliRSMUP07} for extensions.
Uniqueness fails in this context \cite{GangboMcCann96} \cite{FeldmanMcCann02u}.
In the meantime both Monge and Kantorovich problems were found
to enjoy unique solutions
for strictly convex costs such as $c(x,y)=|x-y|^p/p$,
with $p=2$
\cite{Brenier87} \cite{Brenier91} \cite{CuestaMatran89} \cite{Cuesta-AlbertosTuero-Diaz93}
and $p>1$
\cite{Caffarelli96} \cite{GangboMcCann95} \cite{GangboMcCann96}  \cite{Ruschendorf95} \cite{Ruschendorf96}.
 A general criterion for
existence and uniqueness of optimal maps was identified by Gangbo \cite{Gangbo95}
and Levin \cite{Levin99},  building on works of those cited above.
For any pair of destinations $y_1 \ne y_2$ in $Y$,
it prohibits the function
\begin{equation}\label{relative cost}
x \in X \longmapsto c(x,y_1) - c(x,y_2)
\end{equation}
from having critical points on $X$.  Strictly convex functions
$c(x,y)=h(x-y)$ on $X=Y= \R^n$ \cite{Caffarelli96} \cite{GangboMcCann95}
satisfy this condition --- called the twist criterion in \cite{Villani09} ---
but no differentiable cost $c \in C^1(X \times Y)$ satisfies it on any
compact manifold $X$ without boundary.
Although terrestrial transportation takes place on the sphere,
there are few theorems set in topologies other than the ball ---
not to speak of the more exotic landscapes which arise naturally in some applications.
Spherical examples typically show that uniqueness of Kantorovich solutions holds even though Monge
solutions fail to exist \cite{GangboMcCann00}.  Building on these developments,
one of the goals of this article is to expose a criterion for uniqueness of Kantorovich solutions
which works equally well on the sphere and the ball \cite{ChiapporiMcCannNesheim10}.
Called the {\em subtwist} by Chiappori McCann and Nesheim,
this criterion depends on the Morse structure of the cost globally: it
permits the function \eqref{relative cost} to have up to two critical points on $X$
--- a unique global minimum and a unique global maximum.   Unfortunately,
this cannot be satisfied in more exotic topologies such as the $k$-holed torus ($k \ge 1$),
where uniqueness
remains a tantalizing open question. Our discussion is predicated on global differentiability
of the cost, since a wide variety of existence and uniqueness results concerning
optimal solutions to the Monge-Kantorovich problem
have been established for costs with singular sets 
--- including distances in
Riemannian \cite{Cordero-Erausquin99T} \cite{McCann01} \cite{FigalliSIAM07},
sub-Riemannian \cite{AmbrosioRigot04} \cite{AgrachevLee09} \cite{FigalliRifford10}
and Alexandrov \cite{Bertrand08} spaces,
and the mechanical actions arising from Tonelli
Lagrangians \cite{BernardBuffoni06} \cite{FathiFigalli10}.

The proof that the subtwist condition is sufficient for uniqueness relies on progress in Birkhoff's
problem of characterizing extremal doubly stochastic measures on the square.
This problem is esoteric and subtle:  although still not completely resolved,
substantial results have been obtained in the six decades since it was posed
\cite{Douglas64} \cite{Lindenstrauss65} \cite{Losert82} \cite{BenesStepan87} \cite{HestirWilliams95}.
Highlights are surveyed below.

The literature surrounding Birkhoff's problem is modest,  compared to the
recent explosion of research on the
Monge-Kantorovich transportation problem.  We expect the main interest of this
article will therefore lie in its connection to the latter developments.
For simplicity of exposition,  however, we postpone a further description of these connections
to section \ref{S:unique}.  The earlier sections are devoted to
Birkhoff's problem and the issues surrounding it.  Although
less familiar than the Monge-Kantorovich theory to most of our readership,  the
developments surveyed are elementary yet powerful; they
require nothing more sophisticated than measure
theory to discuss.  Readers in need of motivation ---
or those interested primarily in optimal transportation --- are encouraged
to skip directly  to Theorem \ref{T:unique} for a 
preview of the intended application.

\section{Extremal doubly stochastic measures}

An $n \times n$ {\em doubly stochastic matrix} refers to a matrix of non-negative entries
whose columns and rows each sum to $1$.  The doubly stochastic
matrices form a convex subset of all $n \times n$ matrices --- in fact a
convex polytope, whose extreme points are in bijective correspondence with the $n!$
permutations on $n$-letters,  according to Birkhoff \cite{Birkhoff46} and
von Neumann \cite{vonNeumann53}.
For example, the $3 \times 3$ doubly stochastic matrices,
$$
\left(
\begin{matrix}
s & t & 1-s-t \\
u & v & 1-u-v \\
1-s-u & 1-t-v & s+t+u+v -1 \\
\end{matrix}
\right)
$$
form a 4-dimensional polytope with 6 vertices.
Shortly after proving this characterization,
Birkhoff  \cite[Problem 111]{Birkhoff48} initiated the search for a
infinite-dimensional generalization, thus
stimulating a line of research which remains fruitful even today.

A {\em doubly stochastic measure} on the square refers to a non-negative Borel
probability measure on $[0,1]^2$ whose horizontal and vertical marginals both coincide
with Lebesgue measure $\lambda$ on $[0,1]$.  The set of doubly stochastic measures
forms a convex set we denote by $\Gamma(\lambda,\lambda)$
(which is weak-$*$ compact in the Banach space dual to
continuous functions $C([0,1]^2)$ normed by their suprema $\|\cdot\|_\infty$).
A measure is said to be {\em extremal} in
$\Gamma(\lambda,\lambda)$ if it cannot be decomposed as a convex combination
$\gamma = (1-t) \gamma_0 + t \gamma_1$
with $0<t<1$ and $ 0\le \gamma_0 \ne \gamma_1 \in \Gamma(\lambda,\lambda)$.
Since the Krein-Milman theorem asserts that convex
combinations of extreme points are dense (in any compact convex subset of a topological
vector space, Figure \ref{fig.extreme}),  it is natural to want to characterize the extreme points of
$\Gamma(\lambda,\lambda)$.  Another motivation for such a characterization
is that every continuous linear functional on $\Gamma(\lambda,\lambda)$ is
minimized at an extreme point.  Whether or not this extremum is uniquely
attained can be an interesting question, as in the optimal transportation context:
in Figure \ref{fig.extreme} the horizontal coordinate is
minimized at a single point but maximized at two extreme points (and along
the segment joining them).

\begin{figure}[h]
\psfragscanon
\centering
\psfrag{o}{$o$}
\epsfig{file=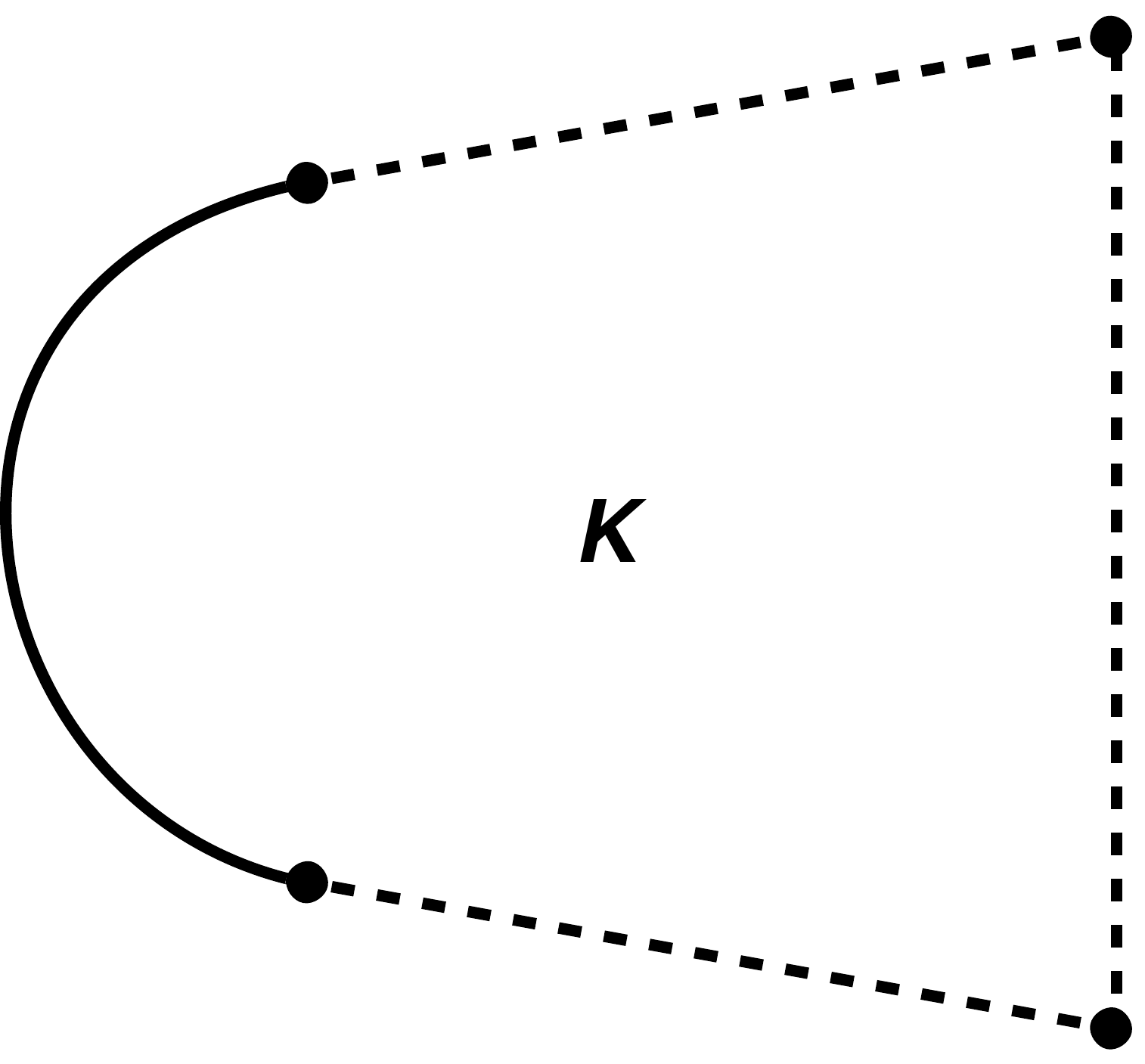, height=6cm}
\caption[Extreme points]
{\label{fig.extreme} Krein-Milman asserts a compact convex set $\mathbf{K}$ can be reconstructed from its extreme
points (denoted here by solid circles $\bullet$ and solid lines $\boldsymbol{-}$).}
\end{figure}

Motivated by the optimization problems already mentioned,
we prefer to formulate the question in slightly greater generality,  by replacing
the two copies of $([0,1],\lambda)$ with probability spaces
$(X,\mu)$ and $(Y,\nu)$,  where $X$ and $Y$ are each subsets of a complete
separable metric space,  and $\mu$ and $\nu$ are Borel probability measures on
$X$ and $Y$ respectively.  This widens applicability of the answer to this question
without increasing its difficulty.
Letting $\Gamma(\mu,\nu)$ denote the Borel probability measures on $X \times Y$
having $\mu$ and $\nu$ for marginals,  we wish to characterize the extreme points of
the convex set $\Gamma(\mu,\nu)$.  Ideally, as in the finite-dimensional case,
this characterization would be given in terms of some geometrical property of the
support of the measure $\gamma$ in $X \times Y$.  Indeed,  if
$\mu = \sum_{i=1}^m m_i \delta_{x_i}$ and $\nu = \sum_{j=1}^n n_j \delta_{y_j}$
are finite, our problem reduces
to characterizing the extreme points of the convex set ${\cal A}$
of $m \times n$ matrices with prescribed column and row sums:
$$
{\cal A} = \{ a_{ij} \ge 0 \mid m_i = \sum_{j=1}^n a_{ij}, \sum_{i=1}^m a_{ij} = n_j \}.
$$
A matrix $(a_{ij})$ is well-known to be extremal in ${\cal A}$ if and only if it is
{\em acyclic},
meaning for every sequence
$a_{i_1 j_1}, \ldots, a_{i_k j_k}$ of non-zero entries
occupying $k \ge 2$ distinct columns and $k$ distinct rows,
the product $a_{i_1 j_2} \ldots a_{i_{k-1} j_{k}} a_{i_k j_1}$ must vanish
--- see Figure \ref{fig.acyclic} or Denny \cite{Denny80}, where the terminology
{\em aperiodic} is used.  Similarly,  a set $S \subset X \times Y$ is
acyclic if for every $k \ge 2$ distinct points $\{x_1, \ldots, x_k\} \subset X$
and $\{y_1,\ldots,y_k\} \subset Y$,  at least one of the pairs
$(x_1 y_1), (x_1,y_2), (x_2,y_2), \ldots, (x_{k-1},y_k),(x_k, y_k),(x_k,y_1)$
lies outside of $S$.

\begin{figure}[h]
\psfragscanon
\centering
\psfrag{o}{$o$}
\epsfig{file=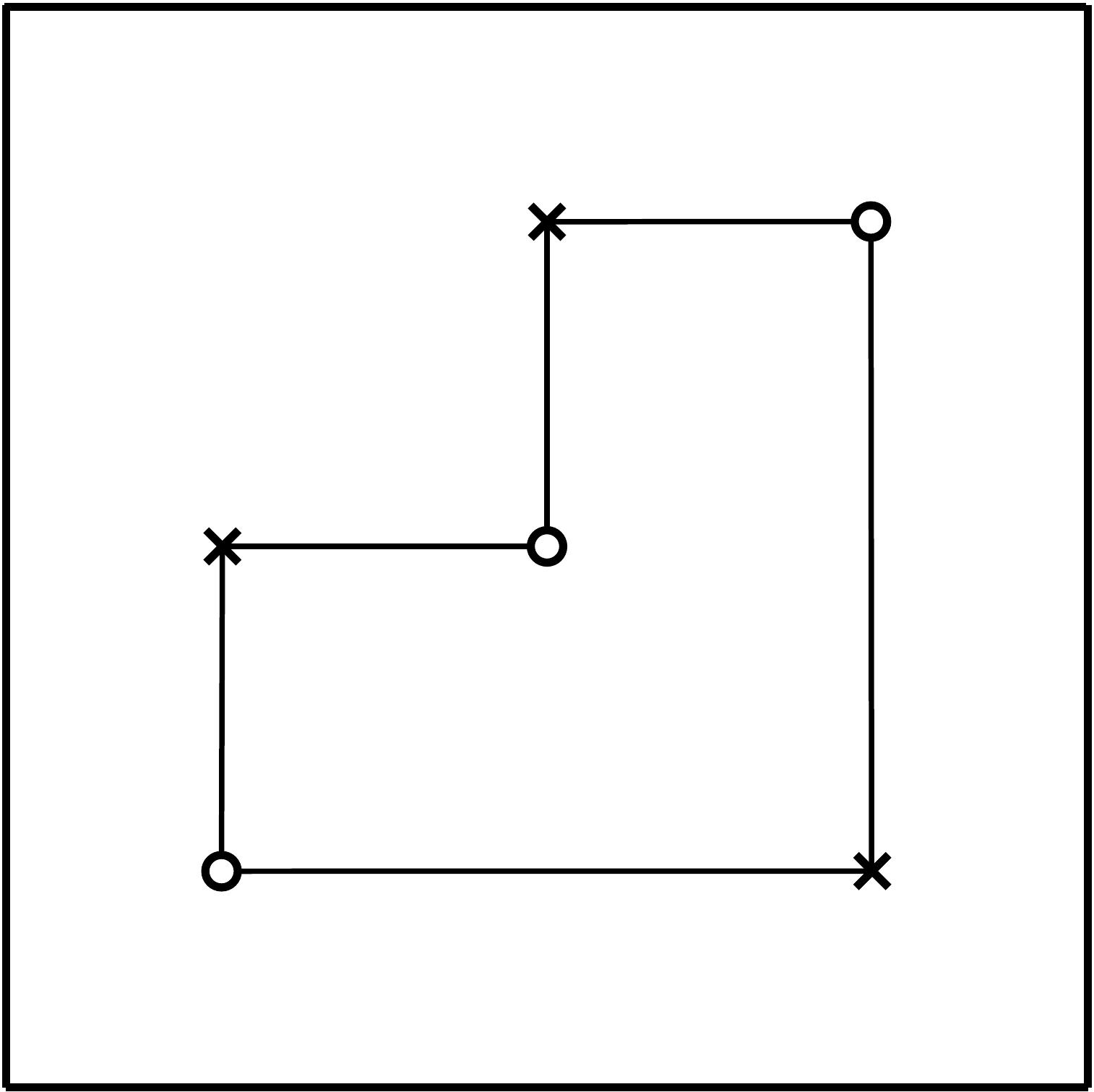, height=6cm}
\caption[Acyclic matrix]
{\label{fig.acyclic} In an {\em acyclic} matrix the product of {\bf x}'s and {\bf o}'s must vanish.}
\end{figure}

A functional analytic characterization of extremality was supplied by
Douglas \cite{Douglas64} and by Lindenstrauss \cite{Lindenstrauss65}:
it asserts that $\gamma$ is extremal in $\Gamma(\mu,\nu)$ if and only if
$L^1(X,d\mu) \oplus L^1(Y,d\nu)$ is dense in $L^1(X \times Y,d\gamma)$.
Although this result is a wonderful starting point,  it is not quite the
characterization we desire for applications,  since it is not easily
expressed in terms of the geometry of the support of $\gamma$.
Significant further progress was made by Bene\v{s}
and \v{S}t\v{e}p\'{a}n, who showed every extremal doubly stochastic
measure vanishes outside some acyclic subset $S \subset X \times Y$ \cite{BenesStepan87}.
Hestir and Williams refined this condition, showing that it
becomes sufficient under an additional Borel measurability hypothesis which,
unfortunately, is not always satisfied \cite{HestirWilliams95}. Some of the subtleties
of the problem were indicated already by Losert's counterexamples \cite%
{Losert82}.  The difficulty of the problem resides partly in
the fact that any geometrical characterization of optimality must be
invariant under arbitrary measure-preserving transformations applied independently
to the horizontal (abscissa) and vertical (ordinate) variables.

In the next two sections we review this line of research, clarifying the nature of
the gap separating necessity from sufficiency and pointing out that
it can be narrowed slightly by replacing the Borel $\sigma$-algebra with suitably
adapted measure-completions. We give a self-contained proof of that part of the
theory which is needed to resolved the uniqueness of optimal transportation with
respect to a smooth cost on the sphere.  This application was first developed in an
economic context by Chiappori, McCann, and Nesheim \cite{ChiapporiMcCannNesheim10},
and forms the subject of the final section of the present manuscript.

\section{Measures on graphs are push-forwards}

Before recalling the characterization of interest,  let us develop a bit of notation
in a simpler setting, and a key argument that we shall require.  Impatient or
knowledgeable readers can 
proceed directly to the final sections below, referring
back to the present section only as needed.

Let $X$ and $Y$ be subsets of complete separable metric spaces,  and fix a
non-negative Borel measure $\mu$ on $X$.
Suppose $f:X \longrightarrow Y$ is {$\mu$-measurable},
meaning $f^{-1}(B)$ is in the $\sigma$-algebra
completion of the Borel subsets of $X$ with respect to the measure $\mu$,
whenever $B$ is relatively Borel in $Y$.
Then a Borel measure on $Y$ is induced,  denoted $f_\#\mu$ and called the
{\em push-forward} of $\mu$ through $f$, and given by
\begin{equation}\label{push-forward}
(f_\#\mu)[B] := \mu[f^{-1}(B)]
\end{equation}
for each Borel $B \subset Y$.  Defining the projections
$\pi^{X_{}}(x,y) = x$ and $\pi^{Y_{}}(x,y)=y$ on $X \times Y$,  this notation permits
the horizontal and vertical marginals of a measure $\gamma \ge 0$ on $X \times Y$
to be expressed as $\pi^{X_{}}_\# \gamma$ and $\pi^{Y_{}}_\# \gamma$ respectively.

The next lemma shows that any measure supported on a graph can be deduced
from its horizontal marginal.  It improves on Lemma 2.4 of \cite{GangboMcCann00}
and various other antecedents,  by using an argument from Villani's
Theorem 5.28 \cite{Villani09}
to extract $\mu$-measurability of $f$ as a conclusion rather that a hypothesis.
As work of, e.g., Hestir and Williams \cite{HestirWilliams95} implies,
although measures on graphs are extremal in $\Gamma (\mu ,\nu )$, the converse is
far from being true; this peculiarity is an inevitable consequence of the infinite
divisibility of $(X,\mu)$.

\begin{lemma}[Measures on graphs are push-forwards]
\label{pure implies unique} Let $X_{}$ and $Y_{}$ be subsets of
complete separable metric spaces, and $\gamma \geq 0$ a $\sigma $-finite
Borel measure on the product space $X_{}\times Y_{}$. Denote the horizontal
marginal of $\gamma $ by $\mu _{}:=\pi _{\#}^{X_{}}\gamma $. If $\gamma $
vanishes outside the graph of $f:X_{}\longrightarrow Y_{}$, meaning
$\{(x,y)\in X_{}\times Y_{}\mid y\neq f(x)\}$ has zero outer measure, then $%
f$ is $\mu _{}$-measurable and $\gamma =(id_{X_{}}\times f)_{\#}\mu _{}$,
where $id_X \times f$ denotes the map $x \in X \longmapsto (x,f(x)) \in X \times Y$.
\end{lemma}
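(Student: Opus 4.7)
The plan is to first extract, from the outer-measure hypothesis, a Borel subset $G$ of $\graph(f)$ carrying full $\gamma$-mass, and then use $G$ to deduce both the $\mu$-measurability of $f$ and the push-forward identity. By $\sigma$-finiteness and the regularity of Borel measures on a complete separable metric space, I would enlarge the outer-$\gamma$-null set $\{(x,y) : y \neq f(x)\}$ to a Borel null set $N \subset X \times Y$ with $\gamma(N)=0$. Setting $G := (X \times Y) \setminus N$ yields a Borel subset of $\graph(f)$ on which $\pi^X$ is injective. Writing $X_0 := \pi^X(G)$, the preimage $(\pi^X)^{-1}(X \setminus X_0)$ is disjoint from $G$ and therefore lies in $N$, so $\mu(X \setminus X_0)=0$.

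Next I would establish that $f$ is $\mu$-measurable. For any Borel $B \subset Y$, the set $G \cap (X \times B)$ is Borel in the Polish ambient space, so its projection is an analytic (Suslin) subset of $X$. By the classical theorem that analytic sets are universally measurable, this projection is measurable with respect to the completion of $\mu$. The injectivity of $\pi^X|_G$, together with $G \subset \graph(f)$, identifies the projection with $X_0 \cap f^{-1}(B)$; combining this with the $\mu$-nullity of $X \setminus X_0$ shows $f^{-1}(B)$ is $\mu$-measurable. This is the step I expect to be the main obstacle, since analytic sets need not be Borel: one genuinely cannot avoid enlarging the $\sigma$-algebra to the $\mu$-completion, and must check that the Suslin theorem is applicable in this possibly non-complete setting by invoking the ambient Polish spaces containing $X$ and $Y$. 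This is exactly the improvement borrowed from Villani's Theorem 5.28 over the earlier formulations.

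Finally, to identify $\gamma$ with $(id_X \times f)_\# \mu$, it suffices to check equality on measurable rectangles, since both are $\sigma$-finite Borel measures. On $G$, the condition $(x,y) \in A \times B$ is equivalent to $x \in A \cap f^{-1}(B)$ and $y=f(x)$, so
\begin{equation*}
(A \times B) \cap G \;=\; \bigl((A \cap f^{-1}(B)) \times Y\bigr) \cap G.
\end{equation*}
Discarding the $\gamma$-null set $N$ on both sides, and using $\mu = \pi^X_\# \gamma$, this yields
\begin{equation*}
\gamma(A \times B) \;=\; \gamma\bigl((A \cap f^{-1}(B)) \times Y\bigr) \;=\; \mu\bigl(A \cap f^{-1}(B)\bigr) \;=\; (id_X \times f)_\# \mu(A \times B),
\end{equation*}
which completes the argument.
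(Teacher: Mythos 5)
Your proof is correct, but it follows a genuinely different route from the paper's. You pass to a Borel hull $N$ of the outer-null off-graph set and then invoke the measurable-projection theorem (projections of Borel sets are analytic, hence universally measurable) to get $\mu$-measurability of $f^{-1}(B) = \pi^X\bigl(G \cap (X \times B)\bigr)$ up to the null set $X \setminus X_0$. The paper instead uses inner regularity of $\sigma$-finite Borel measures on Polish spaces to produce an increasing sequence of \emph{compact} sets $K_i \subset \graph(f)$ carrying the full mass of $\gamma$; since $\pi^X$ restricted to a compact subset of a graph is a homeomorphism onto its compact image, $f$ is continuous on each $X_i = \pi^X(K_i)$ and hence Borel on the $\sigma$-compact set $X_\infty$, after which the same rectangle computation you perform yields both $\mu(X \setminus X_\infty)=0$ and the push-forward identity. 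The paper's argument is entirely elementary --- no descriptive set theory --- and delivers slightly more, namely a Borel representative $f_\infty$ of $f$ defined on a $\sigma$-compact set of full $\mu$-measure; this compactness device (not the Suslin theorem) is also what is actually borrowed from Villani's Theorem 5.28, so your attribution is slightly off. Your route is shorter if one grants universal measurability of analytic sets, but it leans harder on the preliminary reduction to the ambient complete separable spaces: since $X$ and $Y$ are arbitrary subsets, a relatively Borel $G \subset X \times Y$ need not be Borel or even analytic in the ambient product, so the extension of $\gamma$ to the completions (which the paper performs explicitly in its first sentence, and which you only gesture at) is not optional in your argument --- make it explicit before applying the projection theorem.
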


\begin{proof}
Since outer-measure is subadditive, it costs no generality to assume the
subsets $X_{}$ and $Y_{}$ are in fact complete and separable,
by extending $\gamma$ in the obvious (minimal) way. Any $%
\sigma $-finite Borel measure $\gamma$ is regular and $\sigma$-compact on a
complete separable metric space; e.g.\ p.~255 of \cite{Dudley02} or Theorem
I-55 of \cite{VillaniAnalysis}.
Since $\gamma$ vanishes outside $\graph (f) := \{(x,f(x)) \mid x \in X_{}\}$, there is an increasing sequence
of compact sets $K_i \subset K_{i+1} \subset \graph(f)$ whose
union $K_\infty = \lim_{i \to \infty} K_i$ contains the full mass of $\gamma$%
. Compactness of $K_i \subset \graph (f)$ implies continuity of $%
f$ on the compact projection $X_i := \pi^X(K_i)$. Thus the restriction $%
f_\infty$ of $f$ to $X_\infty := \pi^X(K_\infty)$ is a Borel map whose graph
$K_\infty = \graph(f_\infty)$ is a $\sigma$-compact set of full
measure for $\gamma$. We now verify that $\gamma$ and $(id_{X_\infty} \times
f_\infty)_\# \mu_{}$ assign the same mass to each Borel rectangle $U \times V
\subset X_{} \times Y_{}$. Since $(U \times V) \cap \mathop{\rm Graph}%
(f_\infty) = ((U \cap f_\infty^{-1}(V)) \times Y_{}) \cap \mathop{\rm Graph}%
(f_\infty)$ we find
\begin{eqnarray*}
\gamma(U \times V) &=& \gamma((U \cap f_\infty^{-1}(V)) \times Y_{}) \\
&=& \mu_{}(U \cap f_\infty^{-1}(V)),
\end{eqnarray*}
proving $\gamma = (id_{X_\infty} \times f_\infty)_\# \mu_{}$. Taking $U= X_{}
\setminus X_\infty$ and $V = Y_{}$ shows $X_{} \setminus X_\infty$ is $\mu_{}$%
-negligible. Since $id_{X_{}} \times f$ differs from the Borel map $%
id_{X_\infty} \times f_\infty$ only on the $\mu_{}$-negligible complement of
the $\sigma$-compact set $X_\infty$, we conclude $f$ is $\mu_{}$-measurable
and $\gamma = (id_{X_{}} \times f)_\# \mu_{}$ as desired.
\end{proof}

The preceding lemma shows that any measure concentrated on a graph is
uniquely determined by its marginals; $\gamma$ is therefore extremal in
$\Gamma(\pi^{X_{}}_\#\gamma,\pi^{Y_{}}_\#\gamma)$.  As the results of the
next section show,  the converse is far from being true.

\section{Numbered limb systems and extremality}

In this section we adapt Hestir and Williams \cite{HestirWilliams95}
notion of a {\em numbered limb system}
--- also called an {\em axial forest} or a {\em limb numbering system} ---
to $X \times Y$.  Using the axiom of choice,
Hestir and Williams deduced from the acyclicity condition of Bene\v{s} and \v{S}t\v{e}p\'an
\cite{BenesStepan87} that each extremal doubly stochastic measure vanishes
outside some numbered limb system.  Conversely,  they showed that vanishing
outside a numbered limb system is sufficient to guarantee extremality of a
doubly stochastic measure,  provided the graphs (and antigraphs) comprising
the system are Borel subsets of the square.
Our main theorem gives a new proof of this converse in the more general setting
of subsets $X \times Y$ of complete separable metric spaces, and under a slightly
weaker measurability hypothesis on the graphs and antigraphs.
A simple example shows that some measurability hypothesis is nevertheless
required. In the next and final section, we shall see how this converse 
relates to the question of uniqueness in optimal transportation.

Given a map $f:D\longrightarrow Y$ on $D\subset X$, we denote its graph,
domain, range, and the graph of its (multivalued) inverse by
\begin{eqnarray*}
\mathop{\rm Graph}(f):= &\{(x,f(x))\mid x\in D\},& \\
\mathop{\rm Dom}f:= &\pi ^{X}(\mathop{\rm Graph}(f))&=D, \\
\mathop{\rm Ran}f:= &&\phantom{=}\pi ^{Y}(\mathop{\rm Graph}(f)), \\
\mathop{\rm Antigraph}(f):= &\{(f(x),x)\mid x\in \mathop{\rm Dom}f  \}&\subset
Y\times X.
\end{eqnarray*}%
More typically, we will be interested in the $\mathop{\rm Antigraph}%
(g)\subset X\times Y$ of a map $g:D\subset Y\longrightarrow X$.

\begin{figure}[h]
\psfragscanon
\centering
\psfrag{o}{$o$}
\epsfig{file=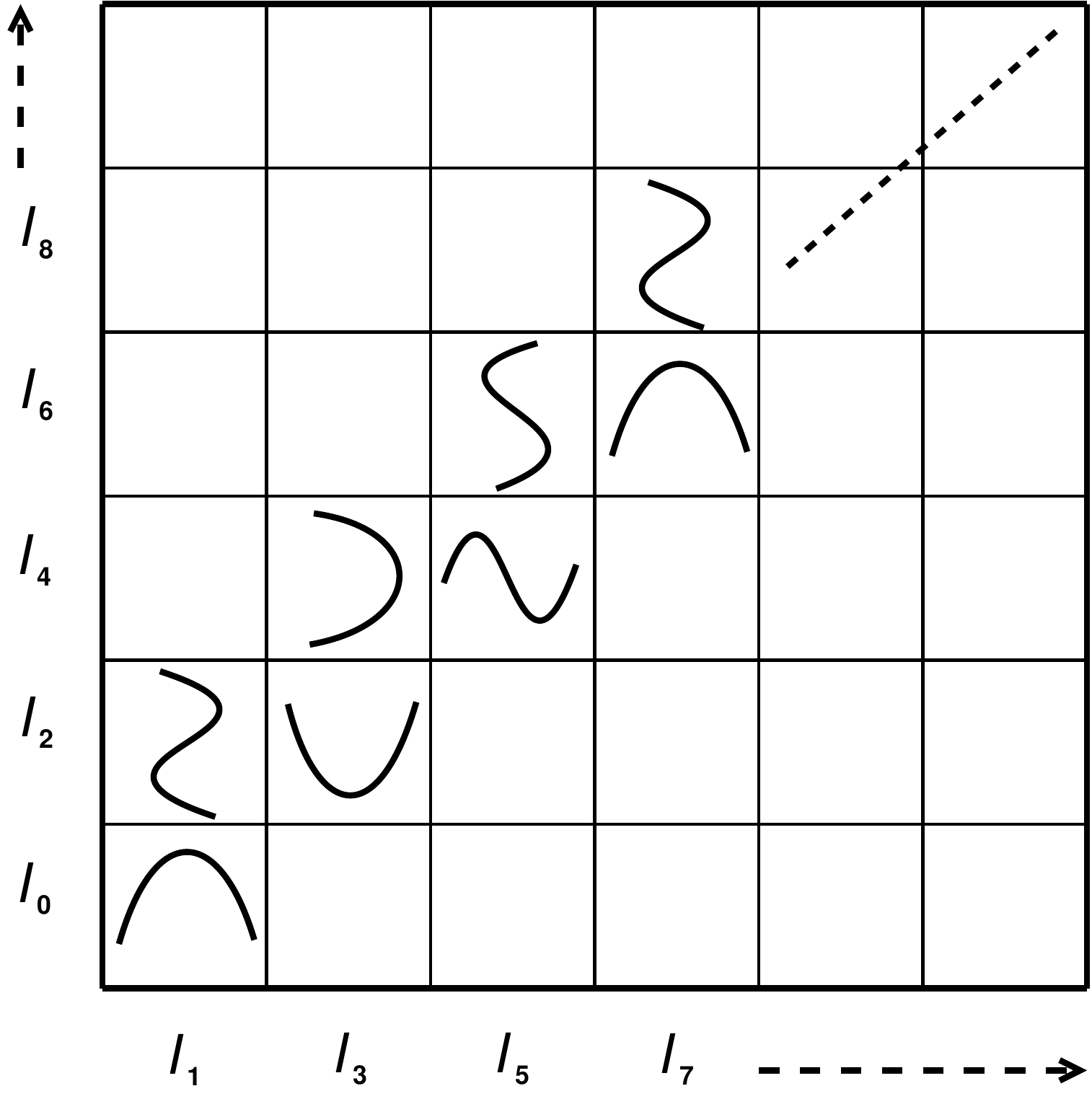, height=7cm}
\caption[Numbered limb system]
{\label{fig.numbered_limb}  \centering The subsets $I_{k}$ need not be connected; in this
numbered limb system they are represented as connected sets for visual convenience only.}
\end{figure}

\begin{definition}[Numbered limb system]
\label{numbered limb system} Let $X_{}$ and $Y_{}$ be Borel subsets of
complete separable metric spaces. A relation $S \subset X_{} \times Y_{}$ is a
\emph{numbered limb system} if there is a countable disjoint decomposition
of $X_{} = \cup_{i=0}^\infty I_{2i+1}$ and of $Y_{} = \cup_{i=0}^\infty I_{2i}$
with a sequence of maps $f_{2i}: \mathop{\rm Dom}(f_{2i}) \subset Y_{}
\longrightarrow X$ and $f_{2i+1}: \mathop{\rm Dom}(f_{2i+1}) \subset X
\longrightarrow Y$ such that $S = \cup_{i=1}^\infty \mathop{\rm Graph}%
(f_{2i-1}) \cup \mathop{\rm Antigraph}(f_{2i})$, with
$\mathop{\rm Dom}(f_{k}) \cup \mathop{\rm Ran}(f_{k+1}) \subset I_{k}$ for each
$k \ge 0$.
The system has (at most) $N$ limbs if $\mathop{\rm Dom}(f_k) = \emptyset$
for all $k>N$. 
\end{definition}

Notice the map $f_0$ is irrelevant to this definition though $I_0$ is not;
we may always take $%
\mathop{\rm Dom}(f_0) = \emptyset$, but require $\mathop{\rm Ran}(f_1)
\subset I_0$. The point is the following theorem and its corollary, which
extends and relaxes the result proved by Hestir and Williams for Lebesgue
measure $\mu_{}=\nu_{} = \lambda$ on the interval $X_{}=Y_{}=[0,1]$.
In it, $\Gamma(\mu,\nu)$ denotes the set of non-negative Borel measures
on $X \times Y$ having $\mu = \pi^X_\# \gamma$ and $\nu = \pi^Y_\# \gamma$
for marginals.
As in the preceding lemma, we say $\gamma$ {\em vanishes}
outside of $S \subset X \times Y$ if $\gamma$ assigns zero outer measure
to the complement of $S$ in $X \times Y$.

\begin{theorem}[Numbered limb systems yield unique correlations]
\label{HestirWilliams}\ \\Let $X_{}$ and $Y_{}$ be subsets of complete
separable metric spaces, equipped with $\sigma$-finite Borel measures
$\mu$ on $X$ and $\nu$ on $Y$.
Suppose there is a numbered limb system $S = \cup_{i=1}^\infty
\mathop{\rm Graph}(f_{2i-1}) \cup \mathop{\rm Antigraph}(f_{2i})$
with the property that $\graph(f_{2i-1})$ and $\antigraph(f_{2i})$
are $\gamma$-measurable subsets of $X \times Y$ for each $i \ge 1$
and for every $\gamma \in \Gamma(\mu,\nu)$ vanishing outside of $S$.
If the system has finitely many limbs or $%
\mu[X]<\infty$, then at most one $\gamma \in \Gamma(\mu,\nu)$
vanishes outside of $S$. If such a measure exists, it is given by
$\gamma = \sum_{k=1}^\infty \gamma_k$ where
\begin{eqnarray}  \label{alternating representation}
\gamma_{2i-1} = (id_{X_{}} \times f_{2i-1})_\# \eta_{2i-1},
&& \gamma_{2i} = (f_{2i} \times id_{Y_{}})_\# \eta_{2i}, \\
 \eta_{2i-1} = \Big(\mu - \pi^{X}_\#\gamma_{2i}\big)   \Big|_{\dom f_{2i-1}} ,
&& \eta_{2i} = \Big(\nu - \pi^{Y}_\#\gamma_{2i+1} \Big) \Big|_{\dom f_{2i}}.
\label{alternating marginals}
\end{eqnarray}
Here $f_k$ is measurable with respect to the $\eta_{k}$ completion of the
Borel $\sigma$-algebra. If the system has $N<\infty$ limbs, $\gamma_k=0$ for
$k > N$, and $\eta_k$ and $\gamma_k$ can be computed recursively from the
formulae above starting from $k=N$.
\end{theorem}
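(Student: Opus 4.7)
The plan is to decompose any $\gamma \in \Gamma(\mu,\nu)$ vanishing outside $S$ into pieces supported on the individual limbs, apply Lemma \ref{pure implies unique} to each piece to obtain \eqref{alternating representation}, derive the recursion \eqref{alternating marginals}, and conclude uniqueness by a telescoping total-variation estimate.

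First I would disjointify the limbs. The disjointness of $\{I_k\}_{k\ge 0}$ combined with the inclusions $\dom(f_k)\cup \ran(f_{k+1})\subset I_k$ forces any intersection among the sets $\{\graph(f_{2i-1})\}_i$ and $\{\antigraph(f_{2j})\}_j$ to occur between $\graph(f_{2i-1})$ and $\antigraph(f_{2i})$ of matching index, where $(x,y)$ satisfies both $y=f_{2i-1}(x)$ and $x=f_{2i}(y)$. Assigning this matching overlap to the antigraph by setting $L_{2i}:=\antigraph(f_{2i})$ and $L_{2i-1}:=\graph(f_{2i-1})\setminus \antigraph(f_{2i})$ produces a pairwise disjoint $\gamma$-measurable cover $S=\bigsqcup_k L_k$, so $\gamma_k := \gamma|_{L_k}$ gives $\gamma = \sum_k \gamma_k$. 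Since $L_{2i-1}$ lies inside the graph of $f_{2i-1}$ and $L_{2i}$ inside the antigraph of $f_{2i}$, Lemma \ref{pure implies unique} (applied after swapping $X$ and $Y$ in the antigraph case) supplies the push-forward identities \eqref{alternating representation} with $\eta_{2i-1}:=\pi^X_\# \gamma_{2i-1}$ and $\eta_{2i}:=\pi^Y_\# \gamma_{2i}$, and simultaneously establishes $\eta_k$-measurability of each $f_k$.

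To derive \eqref{alternating marginals}, I would compute the horizontal marginal of $\gamma$ over $I_{2i-1}$. Because $\dom(f_{2i-1})\cup \ran(f_{2i})\subset I_{2i-1}$ and the $\{I_{2i-1}\}$ are disjoint in $X$, only $\gamma_{2i-1}$ and $\gamma_{2i}$ contribute under $\pi^X$, yielding
\[
\mu|_{I_{2i-1}} = \eta_{2i-1} + (f_{2i})_\# \eta_{2i}.
\]
Restricting to $\dom(f_{2i-1})$, which already contains the support of $\eta_{2i-1}$, solves for $\eta_{2i-1}$ as in the first half of \eqref{alternating marginals}; the vertical analogue gives the second half.

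For uniqueness, suppose $\gamma,\gamma'\in \Gamma(\mu,\nu)$ both vanish outside $S$, produce the associated $\eta_k,\eta_k'$ as above, and set $\delta\eta_k := \eta_k - \eta_k'$. Subtracting the recursions gives
\[
\delta\eta_{2i-1} = -\bigl((f_{2i})_\# \delta\eta_{2i}\bigr)\big|_{\dom(f_{2i-1})},
\]
together with its vertical analogue. Since push-forward and restriction are total-variation contractions on signed measures, $k \mapsto |\delta\eta_k|_{TV}$ is non-decreasing. When there are only $N<\infty$ limbs, $\delta\eta_{N+1}=0$ propagates backward to give $\delta\eta_k=0$ for every $k$, while running \eqref{alternating marginals} from $\eta_{N+1}=0$ yields the explicit recursive formulas. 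For the infinite case, summing $\mu[I_{2i-1}] = \eta_{2i-1}[X] + \eta_{2i}[Y]$ over $i$ reveals $\sum_k |\eta_k|_{TV} = \mu[X]<\infty$, whence $|\eta_k|_{TV} \to 0$ and likewise $|\eta_k'|_{TV}\to 0$; the non-decreasing sequence $|\delta\eta_k|_{TV}$ is then bounded above by $|\eta_k|_{TV} + |\eta_k'|_{TV} \to 0$, forcing $|\delta\eta_k|_{TV}=0$ for every $k$, hence $\gamma_k=\gamma_k'$ and $\gamma=\gamma'$. The principal subtlety will be the disjointification in step one: correctly identifying the only possible overlaps and choosing the assignment consistent with the asymmetric ordering of \eqref{alternating marginals}, in which $\eta_{2i}$ is determined before $\eta_{2i-1}$.
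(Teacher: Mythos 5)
Your proposal is correct, and its skeleton --- restrict $\gamma$ to the individual limbs, invoke Lemma \ref{pure implies unique} on each piece to obtain \eqref{alternating representation}, read \eqref{alternating marginals} off the marginal condition on $I_{2i-1}$, and finish with a total-variation argument --- is the same as the paper's. Two points of comparison. First, your ``disjointification'' is vacuous: a point of $\graph(f_{2i-1})\cap\antigraph(f_{2i})$ would have its $Y$-coordinate in both $\ran(f_{2i-1})\subset I_{2i-2}$ and $\dom(f_{2i})\subset I_{2i}$, which is impossible since the $I_{2i}$ partition $Y$; the paper checks that all graphs and antigraphs are already pairwise disjoint, so your sets $L_k$ coincide with the original limbs and what you flag as ``the principal subtlety'' is not one. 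Second, your uniqueness argument in the countably-infinite case is genuinely different from the paper's and arguably cleaner: you subtract the recursions to get $\|\delta\eta_k\|_{TV}\le\|\delta\eta_{k+1}\|_{TV}$, then use $\sum_k\|\eta_k\|_{TV}=\mu[X]<\infty$ (so $\|\delta\eta_k\|_{TV}\le\|\eta_k\|_{TV}+\|\eta_k'\|_{TV}\to 0$) to force every $\delta\eta_k$ to vanish. The paper instead truncates to the first $N$ limbs, chosen so the tails of $\gamma$ and $\bar\gamma$ carry mass less than $\epsilon$, uses mutual singularity of the $\delta\mu_{2i-1}$ and of the $\delta\nu_{2i}$ to bound $\sum_{k\le N}\|\delta\eta_k\|_{TV}<4\epsilon$, and then uses the fact that the push-forwards in \eqref{alternating representation} are total-variation isometries to conclude $\|\bar\gamma^\epsilon-\gamma^\epsilon\|_{TV}<4\epsilon$. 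Your route exploits the recursion directly and avoids the mutual-singularity step; the paper's avoids having to establish monotonicity along the whole chain. When writing yours out, justify the identity $\mu|_{I_{2i-1}}=\eta_{2i-1}+(f_{2i})_{\#}\eta_{2i}$ by noting that $\pi^X_{\#}\gamma_{2j-1}$ charges only $\dom(f_{2j-1})\subset I_{2j-1}$ and $\pi^X_{\#}\gamma_{2j}$ charges only $\ran(f_{2j})\subset I_{2j-1}$, so no other limbs contribute to that restriction.
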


\begin{proof}
Let $S=\cup _{i=1}^{\infty }\mathop{\rm Graph}(f_{2i-1})\cup
\mathop{\rm
Antigraph}(f_{2i})$ be a numbered limb system whose complement has
zero outer measure for some $\sigma$-finite measure $0 \le \gamma \in \Gamma(\mu,\nu)$.
This means that $I_{k} \supset \mathop{\rm Dom} f_{k}$
gives a disjoint decomposition of $X_{}=\cup _{i=0}^{\infty }I_{2i+1}$
and of $Y_{}=\cup _{i=0}^{\infty }I_{2i}$, and that $\mathop{\rm Ran}%
(f_{k})\subset I_{k-1}$ for each $k\geq 1$.
Assume moreover,  that $\graph(f_{2i})$ and $\antigraph(f_{2i-1})$
are $\gamma$-measurable for each $i \ge 1$.  We wish to show $\gamma$ is
uniquely determined by $\mu$, $\nu$ and $S$.

The graphs $%
\mathop{\rm Graph}(f_{2i-1})$ are disjoint since their domains $I_{2i-1}$ are
disjoint, and the antigraphs $\mathop{\rm Antigraph}(f_{2i})$ are disjoint
since their domains $I_{2i}$ are. Moreover, $\mathop{\rm Graph}(f_{2i-1})$
is disjoint from $\mathop{\rm Antigraph}(f_{2j})$ for all $i,j\geq 1$: $%
\mathop{\rm Ran}(f_{2i-1})\subset I_{2i-2}$ prevents
$\mathop{\rm Graph}(f_{2i-1})$ from intersecting
$\mathop{\rm Antigraph}(f_{2j-2})$ unless $%
j=i$ since the domains $I_{2j-2}$ are disjoint, and
$\mathop{\rm Graph}(f_{2i-1})$ cannot intersect $\mathop{\rm Antigraph}(f_{2i-2})$ since $%
\mathop{\rm Dom}(f_{2i-1}) \subset I_{2i-1}$ is disjoint from $\mathop{\rm Ran}%
(f_{2i-2})\subset I_{2i-3}$.

Let $\gamma_k$ denote the restriction of $\gamma$ to $\antigraph(f_k)$
for $k$ even and to $\graph(f_k)$ for $k$ odd.  Then $\gamma = \sum \gamma_{k}$
by our measurability hypothesis,  and $\gamma_k$ restricts to a Borel
measure on $X \times \dom f_k$ if $k$ is even,  and on $\dom f_k \times Y$
if $k$ odd.
Defining the marginal projections $\mu _{k}=\pi _{\#}^{X_{}}\gamma _{k}$ and $%
\nu _{k}=\pi _{\#}^{Y_{}}\gamma _{k}$, setting
$\eta _{k}=\nu _{k}$ if $k$ even and $\eta_{k}=\mu _{k}$ if $k$ odd
yields (\ref{alternating representation}) and the
$\eta _{k}$-measurability of $f_{k}$ immediately from Lemma \ref{pure implies
unique}.
Since $\nu _{2i}$ vanishes outside $\mathop{\rm Dom}f_{2i}$, from
$\nu = \sum_{k=1}^\infty \nu_k$
we derive $\nu _{2i}=(\nu -\sum_{k \neq 2i}\nu _{k})|_{\mathop{\rm Dom}f_{2i}}$. For $k$
even, $\nu _{k}$ vanishes outside $\mathop{\rm Dom}f_{k} \subset I_k$, while for $k$
odd, $\nu _{k}$ vanishes outside $\mathop{\rm Ran}f_{k}\subset I_{k-1}$,
which is disjoint from $\mathop{\rm Dom}f_{2i}$ unless $%
k=2i+1$. Thus $\eta _{2i}=(\nu -\nu _{2i+1})|_{\mathop{\rm Dom}f_{2i}}$. The
formula (\ref{alternating marginals}) for $\eta _{2i-1}$ follows from
similar considerations.

It remains to show the representation (\ref{alternating representation})--(%
\ref{alternating marginals}) specifies $(\gamma _{k},\eta _{k})$ uniquely
for all $k\geq 1$, and hence determines $\gamma =\sum \gamma _{k}$ uniquely.
If the system has $N<\infty $ limbs, $I_{k}=\emptyset $ for $k>N$ and hence $%
\gamma _{k}=0$. We can compute $\eta _{k}$ and $\gamma _{k}$ starting with $%
k=N$, and then recursively from the formulae above for $k=N-1,N-2,\ldots ,1$%
, so the formulae represent $\gamma $ uniquely. If instead $S$ has countably
many limbs, suppose there are two finite Borel measures $\gamma $ and $\bar{%
\gamma}$ vanishing outside of $S$ and having the same marginals $\mu $ and $\nu $.
For each $k \ge 1$,  recall that
\begin{equation*}
K _{k}:=\left\{
\begin{array}{lc}
\graph(f_k) & k\ \mathrm{odd,} \\
\antigraph(f_k) & k\ \mathrm{even,}%
\end{array}%
\right.
\end{equation*}%
is measurable with respect to both $\gamma$ and $\bar \gamma$.
Given $\epsilon >0$, take $N$ large enough so that both $\gamma $ and $\bar{\gamma}$
assign mass less than $\epsilon $ to $\cup _{k=N}^{\infty }K_{k}$.
Set $\gamma_k = \gamma|_{K_k}$ and $\bar{\gamma}_{k}=\bar{\gamma}|_{K_{k}}$
and denote their marginals by $(\mu_k,\nu_k) = (\pi^X_\# \gamma_k,\pi^Y_\# \gamma_k)$
and $(\bar \mu_k,\bar \nu_k) = (\pi^X_\# \bar \gamma_k,\pi^Y_\# \bar \gamma_k)$.
Observe that both
$\gamma ^{\epsilon}:=\sum_{k=1}^{N}\gamma _{k}$ and
$\bar{\gamma}^{\epsilon }:=\sum_{k=1}^{N}\bar{\gamma}_{k}$
are concentrated on the same numbered limb system; it has
finitely many limbs, and the differences
$\delta \mu^{\epsilon }= \sum_{k=1}^N (\bar \mu_k - \mu_k)$ and
$\delta \nu^{\epsilon }= \sum_{k=1}^N (\bar \nu_k - \nu_k)$ between the marginals of
$\gamma^\epsilon$ 
and $\bar \gamma^\epsilon$ 
have total variation at most $2\epsilon $. Since the
$\delta \mu _{2i-1} = \bar \mu_{2i-1} - \mu_{2i-1}$ are mutually singular,
as are the $\delta \nu _{2i} = \bar \nu_{2i} - \nu_{2i}$,
we find the sum of the total variations of
\begin{equation*}
\delta \eta _{k}:=\left\{
\begin{array}{cc}
\bar{\mu}_{k}-\mu _{k} & k\ \mathrm{odd,} \\
\bar{\nu}_{k}-\nu _{k} & k\ \mathrm{even,}%
\end{array}%
\right.
\end{equation*}%
is bounded: $\sum_{k=1}^{N}\Vert \delta \eta _{k}\Vert _{TV(\mathop{\rm Dom}%
f_{k})}<4\epsilon $. Using (\ref{alternating representation}) to derive
\begin{eqnarray*}
\Vert \bar{\gamma}_{k}-\gamma _{k}\Vert
_{TV(X_{}\times Y_{})} &=&\left\{
\begin{array}{cc}
\Vert (id_{X_{}}\times f_{k})_{\#}\delta \eta _{k}\Vert _{TV(X_{}\times
Y_{})} & k\ \mathrm{odd,} \\
\Vert (f_{k}\times id_{Y_{}})_{\#}\delta \eta _{k}\Vert _{TV(X_{}\times
Y_{})} & k\ \mathrm{even,}%
\end{array}%
\right. \\
&=&\Vert \delta \eta _{k}\Vert _{TV(\mathop{\rm Dom}f_{k})}
\end{eqnarray*}%
%
%
%
%
%
%
%
%
%
%
%
%
%
%
%
%
and summing on $k$ yields
$\Vert \bar{\gamma}^{\epsilon }-\gamma ^{\epsilon }\Vert _{TV(X_{}\times
Y_{})}<4\epsilon $. Since $\gamma ^{\epsilon }\rightarrow \gamma $ and $%
\bar{\gamma}^{\epsilon }\rightarrow \bar{\gamma}$ as $\epsilon \rightarrow 0$%
, we conclude $\bar{\gamma}=\gamma $ to complete the uniqueness proof.
\end{proof}

As in Hestir and Williams \cite{HestirWilliams95},  the uniqueness theorem
above implies extremality as an immediate consequence.

\begin{corollary}[Sufficient condition for extremality]
\label{C:support characterization} Let $X$ and $Y$ be subsets of complete separable
metric spaces,  equipped with $\sigma$-finite Borel measures $\mu$ on $X$ and $\nu$ on $Y$.
Suppose there is a numbered limb system $S = \cup_{i=1}^\infty
\mathop{\rm Graph}(f_{2i-1}) \cup \mathop{\rm Antigraph}(f_{2i})$
with the property that $\graph(f_{2i-1})$ and $\antigraph(f_{2i})$
are $\gamma$-measurable subsets of $X \times Y$ for each $i \ge 1$,
for every $\gamma \in \Gamma(\mu,\nu)$ vanishing outside of $S$.
If the system has finitely many limbs or $%
\mu[X]<\infty$, then any measure $\gamma \in \Gamma(\mu,\nu)$
vanishing outside of $S$ is extremal in the convex set $\Gamma(\mu,\nu)$.
\end{corollary}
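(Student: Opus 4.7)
The plan is to reduce the corollary to Theorem \ref{HestirWilliams} by showing that any convex decomposition of $\gamma$ within $\Gamma(\mu,\nu)$ must consist of measures satisfying the hypotheses of that uniqueness theorem.

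Suppose for contradiction that $\gamma = (1-t)\gamma_0 + t\gamma_1$ with $0<t<1$ and $0 \le \gamma_0 \ne \gamma_1$ in $\Gamma(\mu,\nu)$. First I would observe that $\gamma_0 \le \gamma/(1-t)$ and $\gamma_1 \le \gamma/t$ as Borel measures on $X \times Y$; in particular, every $\gamma$-null Borel set is $\gamma_j$-null for $j=0,1$. Since $\gamma$ assigns outer measure zero to the complement of $S$, one can find a Borel set $B \supset (X \times Y) \setminus S$ with $\gamma(B) = 0$, and then $\gamma_j(B) = 0$, so $\gamma_0$ and $\gamma_1$ also vanish outside $S$ in the required outer-measure sense.

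Next I would verify that the measurability hypothesis of Theorem \ref{HestirWilliams} applies to $\gamma_0$ and $\gamma_1$. By assumption, $\gamma_j \in \Gamma(\mu,\nu)$ and vanishes outside $S$, so the hypothesis of the corollary --- quantified over every such element of $\Gamma(\mu,\nu)$ --- furnishes $\gamma_j$-measurability of each $\graph(f_{2i-1})$ and $\antigraph(f_{2i})$ directly. With both $\sigma$-finiteness (inherited from $\mu[X] < \infty$ or from the finite-limb hypothesis) and the measurability hypothesis in place for $\gamma_0, \gamma_1, \gamma$ simultaneously, Theorem \ref{HestirWilliams} asserts that at most one element of $\Gamma(\mu,\nu)$ vanishes outside $S$. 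Hence $\gamma_0 = \gamma = \gamma_1$, contradicting $\gamma_0 \ne \gamma_1$ and completing the proof.

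The only mildly delicate point --- and the one I would flag as the main obstacle --- is the passage from "$\gamma$ assigns zero outer measure to $S^c$" to the same property for $\gamma_0$ and $\gamma_1$; this is where it matters that $\gamma_j$ is dominated by a constant multiple of $\gamma$ (as signed measures on the Borel $\sigma$-algebra), which follows automatically from the convex decomposition with $0 < t < 1$. Everything else is a direct appeal to the previous theorem.
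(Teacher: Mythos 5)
Your proposal is correct and follows essentially the same route as the paper: decompose $\gamma=(1-t)\gamma_0+t\gamma_1$, note that each $\gamma_j$ is dominated by a constant multiple of $\gamma$ and hence also vanishes outside $S$, and then invoke the uniqueness assertion of Theorem \ref{HestirWilliams} to force $\gamma_0=\gamma_1$. Your version is if anything slightly more careful than the paper's (which writes $\gamma\ge\gamma_0$ where $\gamma\ge(1-t)\gamma_0$ is what actually holds, and leaves the outer-measure transfer implicit).
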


\begin{proof}
Suppose a measure $\gamma \in \Gamma(\mu,\nu)$ vanishes outside a numbered limb
system $S$ satisfying the hypotheses of the corollary.
If $\gamma =(1-t)\gamma
_{0}+t\gamma _{1}$ with $\gamma _{0},\gamma _{1}\in \Gamma (\mu ,\nu )$ and $%
0<t<1$, then $\gamma \geq \gamma _{0}$ and $\gamma \geq \gamma _{1}$, so both
$\gamma _{0}$ and $\gamma _{1}$ vanish outside of $S$.
According to Theorem \ref{HestirWilliams}, they are uniquely determined by
$S $ and their marginals, hence $\gamma _{0}=\gamma _{1}$ to establish the corollary.
\end{proof}

The following example confirms that a measurability gap still remains between
the necessary and sufficient conditions for extremality.
It is a close variation on the standard example of a non-Lebesgue measurable
set from real analysis.  Together with the lemma and theorem preceding,  this
example makes clear that measurability is required only to allow the graphs
to be separated from each other and from the antigraphs in an additive way.

\begin{example}[An acyclic set supporting non-extremal measures]\ \\
Let $\lambda$ denote Lebesgue measure and define the maps $f_0(x) = x$ and
$f_1(x) = x + \sqrt 2$ (mod 1) on the unit interval $X=Y=[0,1]$.
Notice $\graph(f_i) \subset [0,1]^2$ supports the doubly stochastic measure
$\gamma_i = (id \times f_i)_\#\lambda$ for $i=0$ and $i=1$;  (both measures
are extremal in $\Gamma(\lambda,\lambda)$ by Corollary \ref{C:support characterization}).
Irrationality of $\sqrt 2$ implies $S = \graph(f_0) \cup \graph(f_1)$ is an acyclic set,
hence can be expressed as a numbered limb system according to Hestir and
Williams \cite{HestirWilliams95}. On the other hand,  there are doubly stochastic
measures such as $\gamma:= \frac{1}{2}(\gamma_0+\gamma_1)$ which vanish outside of $S$ but
which are manifestly not extremal.
\end{example}

\section{Uniqueness of optimal transportation}
\label{S:unique}

In this section we apply the foregoing results
to the uniqueness question for optimal transportation on manifolds,
which arises when one wants to use a continuum
of sources to supply a continuum of sinks (modeled by $\mu$ and $\nu$ respectively)
as efficiently as possible.

Given subsets $X$ and $Y$ of complete separable metric spaces equipped with Borel
probability measures,  representing the distributions  $\mu$
of production on $X$ and $\nu$ of consumption on $Y$,  the
Kantorovich-Koopmans \cite{Kantorovich42} \cite{Koopmans49}
transportation problem is to find $\bar \gamma \in\Gamma(\mu,\nu)$ correlating
production with consumption so as to minimize the expected transportation cost
\begin{equation}\label{MKPa}
\inf_{\gamma \in \Gamma(\mu,\nu)} \int_{X \times Y} c(x,y) d\gamma(x,y)
\end{equation}
against some continuous function $c \in C(X \times Y)$.  Hereafter we shall be solely
concerned with the case in which $X$ is a differentiable manifold,  $\mu$ is
absolutely continuous with respect to coordinates on $X$,  and the cost function
$c \in C^1(X \times Y)$ is differentiable with local control on the magnitude of its
$x$-derivative $d_x c(x,y)$ uniformly in $y$;  for convenience we also suppose $Y$ to be a differentiable
manifold and $c$ is bounded,  though this is not really necessary:
substantially weaker assumptions also suffice \cite{ChiapporiMcCannNesheim10};
c.f.~\cite{GangboMcCann96} \cite{Gigli09p} \cite{FigalliGigli10p}.

In this setting one immediately asks whether the infimum (\ref{MKPa}) is
uniquely attained.  Since attainment is evident,  the question here is uniqueness.
If $c$ satisfies a {\em twist} condition, meaning
$x \in X \longmapsto c(x,y_1) - c(x,y_2)$ has no critical points for
$y_1 \ne y_2 \in Y$,  then we shall see that not only is the minimizing $\gamma$ unique,  but its
mass concentrates entirely on the graph of a single map $f_1:X \longrightarrow Y$
(a numbered limb system with one limb), thus solving a form of the transportation
problem posed earlier by Monge \cite{Monge81} \cite{Kantorovich48}.
This was proved in comparable generality by Gangbo \cite{Gangbo95} and Levin \cite{Levin99}
(see also Ma, Trudinger and Wang \cite{MaTrudingerWang05}),  building on the more specific
examples of strictly convex cost functions $c(x,y) = h(x-y)$ in $X=Y=\R^n$
analyzed by Caffarelli \cite{Caffarelli96},
Gangbo and McCann \cite{GangboMcCann95}
\cite{GangboMcCann96}, R\"uschendorf \cite{Ruschendorf95} \cite{Ruschendorf96}
and in case $h(x)=|x|^2$ by Abdellaoui and Heinich \cite{AbdellaouiHeinich94},
Brenier \cite{Brenier87} \cite{Brenier91},
Cuesta-Albertos, Matran, and Tuero-Diaz \cite{CuestaMatran89} \cite{Cuesta-AlbertosTuero-Diaz93},
Cullen and Purser \cite {CullenPurser84}  \cite{CullenPurser89} \cite{PurserCullen87},
Knott and Smith \cite{KnottSmith84} \cite{SmithKnott87},
and R\"uschendorf and Rachev \cite{RuschendorfRachev90}.
Adding further restrictions beyond this twist hypothesis allowed
Ma, Trudinger, Wang \cite{MaTrudingerWang05} \cite{TrudingerWang09b},
and later Loeper \cite{Loeper09}, to develop
a regularity theory for the map $f_1:X \longrightarrow Y$,
embracing Delano\"e \cite{Delanoe91}, Caffarelli \cite{Caffarelli92} \cite{Caffarelli96b}
and Urbas' \cite{Urbas97} results for the quadratic cost,
Gangbo and McCann's for its restriction to convex surfaces \cite{GangboMcCann00}, and
Wang's for reflector antenna design \cite{Wang96},  which involves the restriction of
$c(x,y)= -\log |x-y|$ to the sphere \cite{GlimmOliker03} \cite{Wang04}.
Unfortunately,  the twist hypothesis,  also known as a generalized Spence-Mirrlees
condition in the economic literature,
cannot be satisfied for smooth costs $c$ on compact manifolds $X \times Y$,
and apart from the result we are about to discuss there are
no general theorems which
guarantee uniqueness of minimizer to (\ref{MKPa}) in this context.  With this in mind,
let us state our main theorem,  a version of which was established
in a more complicated economic setting by Chiappori, Nesheim, and
McCann \cite{ChiapporiMcCannNesheim10}.  The streamlined formulation
and argument given below should prove more interesting and accessible to
a mathematical readership.


\begin{theorem}[Uniqueness of optimal transport on manifolds]\label{T:unique}
\ \\Let $X$ and $Y$ be complete separable manifolds equipped with Borel probability
measures $\mu$ on $X$ and $\nu$ on $Y$.  Let $c \in C^1(X \times Y)$ be a bounded
cost function such that for each $y_1 \ne y_2 \in Y$, the map
\begin{equation}\label{subtwist}
x \in X \longmapsto c(x,y_1) - c(x,y_2)
\end{equation}
has no critical points,  save at most one global minimum and at most one global maximum.
Assume $d_x c(x,y)$ is locally bounded in $x$,  uniformly in $Y$.
If $\mu$ is absolutely continuous in each coordinate chart on $X$,
then the minimum (\ref{MKPa}) is uniquely attained;  moreover,  the minimizer
$\gamma \in \Gamma(\mu,\nu)$ vanishes outside a numbered limb system having at most two
limbs.
\end{theorem}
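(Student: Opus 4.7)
The plan is to combine Kantorovich duality with the subtwist condition to show that the support of any optimizer lies inside a numbered limb system with at most two limbs, and then invoke Corollary \ref{C:support characterization} and Theorem \ref{HestirWilliams} to conclude both extremality and uniqueness in one stroke. Boundedness and continuity of $c$ together with compactness of $\Gamma(\mu,\nu)$ in the weak-$\ast$ topology guarantee existence of an optimizer $\gamma$ and, by standard duality, a $c$-concave Kantorovich potential $u:X\to\R$ such that every optimizer is supported in $\partial^c u := \{(x,y)\in X\times Y : u(x)+u^c(y)=c(x,y)\}$. The local bound on $d_x c(x,y)$ uniformly in $y$ makes $u$ locally Lipschitz in coordinate charts, and absolute continuity of $\mu$ then produces a full-$\mu$-measure set $X^0\subset X$ of differentiability points of $u$.

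Next I would exploit subtwist to extract a pointwise dichotomy. At $x_0\in X^0$ and any $y\in\partial^c u(x_0)$, the envelope inequality $u(\cdot)\le c(\cdot,y)-u^c(y)$ with equality at $x_0$ forces $du(x_0)=d_xc(x_0,y)$. Hence two distinct partners $y_1\ne y_2\in\partial^c u(x_0)$ would make $x_0$ a critical point of $h_{y_1,y_2}(x):=c(x,y_1)-c(x,y_2)$, so by subtwist $x_0$ must be either the \emph{unique} global minimum or the \emph{unique} global maximum of $h_{y_1,y_2}$; I will label such $x_0$ as \emph{min-type} or \emph{max-type} accordingly, verifying via $c$-cyclical monotonicity of $\spt\gamma$ that the label is consistent as $(y_1,y_2)$ varies among the partners of $x_0$.

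To build the two-limb system, let $B\subset X^0$ be the set of differentiability points admitting at least two $y$-partners in $\spt\gamma$. On $X^0\setminus B$, define $f_1$ by sending each $x$ to its unique $y$-partner, so $\spt\gamma\cap((X^0\setminus B)\times Y)\subset\graph(f_1)$. The crucial structural claim is that on $B\times Y$ the roles reverse: if some $y_0\in Y$ were matched through $\spt\gamma$ to two distinct branch points $x_0,x_0'\in B$, then at each $x_i$ one picks a second partner $y_i'$ and invokes $x_i$ being the \emph{unique} global extremum of $h_{y_0,y_i'}$, from which a short cyclic-monotonicity argument comparing $h_{y_0,y_0'}(x_0)$ and $h_{y_0,y_0'}(x_0')$ (or an analogous three-point cycle) contradicts the ``at most one min and one max'' clause of subtwist. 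This gives a single-valued map $f_2:\dom f_2\subset Y\to X$ with $\spt\gamma\cap(B\times Y)\subset\antigraph(f_2)$, and the $\sigma$-compact exhaustion technique of Lemma \ref{pure implies unique} (together with a measurable-selection argument) renders $\graph(f_1)$ and $\antigraph(f_2)$ $\gamma$-measurable for every $\gamma\in\Gamma(\mu,\nu)$ supported in the resulting set $S$. Setting $I_1=X\setminus B$, $I_3=B$, $I_0=\ran f_1$, $I_2=\dom f_2$ presents $S=\graph(f_1)\cup\antigraph(f_2)$ as a numbered limb system with at most two limbs, whence Corollary \ref{C:support characterization} delivers extremality and Theorem \ref{HestirWilliams} uniqueness.

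The main obstacle is the structural claim in Step 3: subtwist provides only \emph{local} two-critical-point information at each branch point, while the required contradiction mixes two distinct branch points and must accommodate the possibility that $x_0$ is min-type while $x_0'$ is max-type for different auxiliary partners. Globalizing the local dichotomy through cyclic monotonicity, and simultaneously arranging a measurable decomposition so that Corollary \ref{C:support characterization} applies, is the delicate part. One cannot simply discard $B$ as $\mu$-null either, since it may carry positive $\gamma$-mass through the $\nu$-marginal; controlling this interaction by routing $B$-mass through the antigraph of $f_2$, rather than through the graph of $f_1$, is what makes the two-limb bound tight.
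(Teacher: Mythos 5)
Your first two steps (duality, a locally Lipschitz potential, Rademacher, and the observation that two partners $y_1\ne y_2$ of a differentiability point $x_0$ force $d_xc(x_0,y_1)=Dq(x_0)=d_xc(x_0,y_2)$, so $x_0$ is the unique global extremum of $c(\cdot,y_1)-c(\cdot,y_2)$) coincide with the paper's. The gap is the structural claim of your third step, and it is not merely ``delicate'': the branch-point decomposition is the wrong one. First, the assertion that no $y_0$ can be matched to two distinct branch points does not follow from the subtwist condition: if $y_0$ is matched to $x_0\ne x_0'$ with respective second partners $y_1'\ne y_2'$, then $x_0$ and $x_0'$ are unique global maxima of the two \emph{different} functions $c(\cdot,y_0)-c(\cdot,y_1')$ and $c(\cdot,y_0)-c(\cdot,y_2')$, and chasing the inequalities only yields $c(x_0,y_1')-c(x_0,y_2')<r(y_1')-r(y_2')<c(x_0',y_1')-c(x_0',y_2')$, which is perfectly consistent; no cyclic-monotonicity contradiction appears. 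Second, and decisively, even where your $f_2$ is single-valued the resulting $S=\graph(f_1)\cup\antigraph(f_2)$ need not satisfy the numbered-limb axiom $\ran(f_1)\cap\dom(f_2)=\emptyset$, because a single $y$ can be matched both to a branch point and to a non-branch point. This already happens in the paper's circular-lake example (Figure \ref{fig.extreme_doubly_stochastic}): a school location $y\in J_{Y1}$ receives students from a branch point in $J_X$ via $\mathbf{t}^-$ and from a non-branch point via $\mathbf{t}^+$; your rule places the first pair in $\antigraph(f_2)$ and the second in $\graph(f_1)$, so $y\in\ran(f_1)\cap\dom(f_2)$ and Theorem \ref{HestirWilliams} cannot be invoked. (Your partition of $X$ into $I_1=X\setminus B$ and $I_3=B$ also violates $\ran(f_2)\subset I_1$ outright.)

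The correct device --- and what the paper actually does --- is to order the partners of each \emph{fixed} $x_0$ using the dual potentials rather than to partition $X$ into branch and non-branch points. If $y_1\ne y_2$ are both partners of $x_0\in\dom Dq$, then since the global extremum in \eqref{subtwist} is \emph{unique}, after relabeling one gets $q(x)\le c(x,y_1)-r(y_1)\le c(x,y_2)-r(y_2)$ for every $x\in X$ with the second inequality strict unless $x=x_0$; hence the non-minimal partner $y_2$ meets the contact set $Z$ only at $x_0$ and is routed through $\antigraph(f_2)$, while the minimal partner stays in $\graph(f_1)$ even when $x_0$ is a branch point. Disjointness of $\ran(f_1)$ from $\dom(f_2)$ is then automatic: a non-minimal partner is matched to a single $x_0$, at which it is by definition not minimal. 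Finally, your appeal to ``a measurable-selection argument'' is too weak for the hypotheses of Theorem \ref{HestirWilliams}, which require the limbs to be $\gamma$-measurable for \emph{every} competitor $\gamma\in\Gamma(\mu,\nu)$ vanishing outside $S$, not just for the optimizer; the paper secures genuine Borel measurability by writing $\graph(f_1)=(\dom Dq\times Y)\cap Z\cap\{h=0\}$, where $h(x_1,y_1)=\inf\{\Delta(x_1,y_1,x,y_2):(x_1,y_2)\in Z\}$ is Borel by Lemma \ref{Borel}.
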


\begin{proof}
We first prove that there is a numbered limb system having at most
two limbs,  outside of which the mass of all minimizers $\gamma$ vanishes.
A detailed argument confirming the plausible fact that the graphs of these limbs are
Borel subsets of $X \times Y$ will be given later.
Uniqueness of $\gamma$ then follows from Theorem \ref{HestirWilliams}.

By linear programming duality (due to Kantorovich and Koopmans in this context),
it is well-known \cite{Villani09}
that there exist upper semi-continuous potentials $q \in L^1(X,d\mu)$ and $r \in L^1(Y,d\nu)$ with
\begin{equation}\label{c-transform}
q(x) = \inf_{y \in Y} c(x,y) - r(y)
\end{equation}
such that
\begin{equation}\label{duality}
\inf_{\gamma \in \Gamma(\mu,\nu)} \int_{X \times Y} c(x,y) d\gamma(x,y) = \int_X q(x) d\mu(x) + \int_Y r(y) d\nu(y).
\end{equation}
From (\ref{c-transform}) we see
\begin{equation}\label{zero set}
c(x,y) - q(x) - r(y) \ge 0,
\end{equation}
and let
\begin{equation}
Z:=\{(x,y)\in X\times Y | c(x,y)-q(x)-r(y)=0\}
\end{equation}
denote the set where the non-negative function $c(x,y)-q(x)-r(y)$ vanishes. Lower semi-continuity of this function implies $Z$ is a  closed subset of $X\times Y$.
Notice that  (\ref{duality}) implies any minimizer $\gamma \in \Gamma(\mu,\nu)$
vanishes outside the zero set $Z \subset X \times Y$ of the non-negative
function appearing in (\ref{zero set}).  It remains
to show this set $Z$ is contained in a numbered limb system consisting of
at most two limbs (apart from a $\mu \otimes \nu$ negligible set).

From (\ref{c-transform}), $q$ is locally Lipschitz, since
$d_x c(x,y)$ is controlled locally in $x$, independently of $y \in Y$.
Rademacher's theorem therefore combines with absolute continuity of
$\mu$ to imply $q$ is differentiable $\mu$-almost everywhere;  we can
safely ignore any points in $X$ where differentiability of $q$ fails,
since they constitute a set of zero volume: $\gamma[\dom Dq \times Y]
= \mu[\dom Dq]=1$.
Taking $x_0 \in \dom Dq$,  suppose $(x_0,y_1)$ and $(x_0,y_2)$ both lie in $Z$,
hence saturate the inequality (\ref{zero set}).
Then $d_x c(x_0,y_1) = Dq(x_0) = d_x c(x_0,y_2)$.
In case the cost is twisted,  meaning
(\ref{subtwist}) has no critical points,  we conclude $y_1=y_2$ hence
$Z \cap (\dom Dq \times Y)$ is contained in a graph.
This completes the proofs by Gangbo and Levin
of existence (and uniqueness) of a solution $y_1=f_1(x_0)$ to Monge's problem,
pairing almost every $x_0 \in X$ with a single $y_1 \in Y$.
Notice uniqueness follows from Lemma \ref{pure implies unique} without further
measurability assumptions.

In the present setting,  however,  we only know that $x_0$ must be
a global minimum or global maximum of the function (\ref{subtwist}).  Exchanging
$y_1$ with $y_2$ if necessary yields
\begin{equation}\label{keep away}
q(x) \le c(x,y_1) - r(y_1)  \le c(x,y_2) - r(y_2)
\end{equation}
for all $x \in X$,  the second inequality being strict unless $x=x_0$,
in which case both inequalities are saturated.  Strictness of inequality
(\ref{keep away}) implies $(x,y_2) \not\in Z$ unless $x=x_0$.  In other words,
$(x,y_2) \in Z$ lies on the antigraph of a function $f_2(y_2) = x_0$
well-defined at $y_2$.  There may or may not be a point
$y_0 \in Y$ different from $y_1$ such that
\begin{equation}
\label{Graph1}
q(x) \le c(x,y_0) - r(y_0) \le c(x,y_1) - r(y_1)
\end{equation}
for all $x \in X$.  If such a point $y_0$ exists,  then
$(x_0,y_1) \in \antigraph(f_2)$ as above.  If no such $y_0$ exists,
setting $f_1(x_0) := y_1$ yields
$Z \cap (\dom Dq \times Y) \subset \graph(f_1) \cup \antigraph(f_2)$.
Since the range of $f_1$ is disjoint from the domain of $f_2$,  this completes
the proof that --- up to $\gamma$-negligible sets ---
$Z$ lies in a numbered limb system with at most two limbs, as desired.

Let us now prove Borel measurability of these {\it limbs}. To do this, we define the cross-difference
as in McCann \cite{McCann99},
$$ \Delta(x,y,x',y'):= c(x,y)+c(x',y')-c(x,y')-c(x',y)$$
which is a continuous function on $(X\times Y)^2$ and notice that $\Delta \leq 0$ on $Z^2$, i.e, any two  $(x,y)$ and $(x',y')$ in $Z$ satisfy
$$\Delta(x,y,x',y') \leq 0 $$
This well-known fact \cite{SmithKnott92} can be deduced by summing the inequalities
\begin{eqnarray*}
0 &\leq  c(x',y) - q(x')- r(y) = c(x',y)-q(x') +q(x) - c(x,y)\\
0 &\leq   c(x,y') - q(x)- r(y') = c(x,y')-q(x) +q(x') - c(x',y').
\end{eqnarray*}

Closedness of $Z$ and $\sigma$-compactness of $B=X\times Y$ imply
 $$h(x_1,y_1):= \inf_{\{ (x,y_2)\in X\times Y |(x_1,y_2)\in Z  \}} \Delta(x_1,y_1,x,y_2) $$
 is Borel on $X\times Y$, according to Lemma \ref{Borel} below.
 Taking $y_2=y_1 $ implies $h \le 0$ on $Z$.
 A point $(x_1,y_1)\in Z $ is said to be {\it marked} if $x_1\in X$ and $h(x_1,y_1)=0$, i.e
 \begin{equation}
 \label{marked}
  c(x,y_1) - c(x,y_2)\le c(x_1,y_1)-c(x_1,y_2)
  \end{equation}
 for all $x\in X$ and $(x_1,y_2)\in Z$. This definition is equivalent to saying that there is no
 $y_0$ satisfying (\ref{Graph1}) , i.e , the set of marked points in
 $Z\cap ({\rm Dom} Dq \times Y )$ is equal to Graph($f_1$). This implies Graph($f_1$) =
 $({\rm Dom} Dq \times Y )\cap Z\cap \{(x,y)|h(x,y)=0\} $ hence is a Borel subset of $X\times Y$.
 Borel measurability of $\{h>0\}$ and hence Antigraph($f_2$) also follows.
 \end{proof}


\begin{lemma}
\label{Borel}
Let A and B be topological spaces and $ Z\subset A\times B$ be closed. If
$g:A\times B\rightarrow {\bf R}\cup\{-\infty\}$ is lower semi-continuous,
and B is $\sigma$-compact, then
the following function $h$ is Borel:
$$ h(a):=\inf_{\{b\in B|(a,b)\in Z\}} g(a,b).$$
\end{lemma}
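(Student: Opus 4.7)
The plan is to exploit $\sigma$-compactness of $B$ to reduce the infimum over the full fibers $Z_a := \{b \in B : (a,b) \in Z\}$ to countably many infima over compact fibers, where a standard closed-projection argument yields measurability. First I would write $B = \bigcup_{n \geq 1} K_n$ with each $K_n$ compact and $K_n \subset K_{n+1}$, and define
\[ h_n(a) := \inf \{ g(a,b) : b \in K_n,\ (a,b) \in Z \}, \]
with the convention $\inf \emptyset = +\infty$. Since the slices $Z_a \cap K_n$ increase to $Z_a$, the sequence $h_n(a)$ decreases pointwise to $h(a)$; therefore $h = \inf_n h_n$ will be Borel as soon as each $h_n$ is, via $\{h < t\} = \bigcup_n \{h_n < t\}$.

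The heart of the argument is to prove each $h_n$ is lower semi-continuous by establishing, for every finite $t$, the identity
\[ \{a \in A : h_n(a) \leq t\} = \pi_A\bigl( Z \cap \{g \leq t\} \cap (A \times K_n) \bigr). \]
The non-trivial inclusion uses that the lsc function $b \mapsto g(a,b)$ attains its minimum on the (closed, possibly empty) set $Z_a \cap K_n$ inside the compact set $K_n$. The set being projected is closed in $A \times K_n$ as an intersection of three closed sets (closedness of $Z$ by hypothesis, closedness of $\{g \leq t\}$ from lsc of $g$, and $A \times K_n$ trivially). By the classical closed-projection lemma --- projection along a compact fiber takes closed sets to closed sets --- its image in $A$ is closed, so $\{h_n \leq t\}$ is closed and $h_n$ is lsc.

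The main obstacle is to assemble the three ingredients (closedness of $Z$, lsc of $g$, compactness of $K_n$) in the right way: compactness of $K_n$ is essential both for the closed-projection lemma and for attainment of the infimum, which is why one must decompose $B$ into compact pieces rather than attempt to argue in one shot with $\sigma$-compact $B$ (projections along non-compact fibers need not preserve closedness). Once lsc of each $h_n$ is in hand, passing from the $h_n$ to $h$ is a routine countable operation giving a Borel function.
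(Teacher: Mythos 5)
Your argument is correct and complete. Note that the paper itself offers no proof of this lemma --- it simply cites Lemma~A.4 of \cite{ChiapporiMcCannNesheim10} --- so there is no internal argument to compare against; what you have written is a self-contained proof of the standard kind one would expect that reference to contain. The three pillars all check out: (i) the reduction $h=\inf_n h_n$ via an increasing compact exhaustion $B=\cup_n K_n$ is valid because the fibers $Z_a\cap K_n$ increase to $Z_a$ (with the convention $\inf\emptyset=+\infty$ handling empty fibers consistently on both sides); (ii) the identity $\{h_n\le t\}=\pi_A\bigl(Z\cap\{g\le t\}\cap(A\times K_n)\bigr)$ holds for finite $t$, the nontrivial inclusion resting on attainment of the infimum of the lower semi-continuous slice $g(a,\cdot)$ on the nonempty compact set $Z_a\cap K_n$ (which also works when the infimum is $-\infty$, e.g.\ by the finite-intersection-property form of the argument); and (iii) the closed-projection lemma applies since the set being projected is closed in $A\times K_n$ and the fiber $K_n$ is compact. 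You correctly isolate why the compact decomposition is indispensable rather than arguing directly with the $\sigma$-compact $B$. The only cosmetic point worth making explicit is that lower semi-continuity of each $h_n$ as an extended-real-valued function (closed sublevel sets $\{h_n\le t\}$ for all real $t$) already suffices for Borel measurability into $[-\infty,+\infty]$, after which $h=\inf_n h_n$ is Borel by the countable union you display.
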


\begin{proof}
See Lemma A.4 of  \cite{ChiapporiMcCannNesheim10}.
\end{proof}

Let us conclude by recalling an example of an extremal doubly stochastic measure
which does not lie on the graph of a single map,  drawn from
work of Gangbo and McCann \cite{GangboMcCann00} and Ahmad \cite{Ahmad04}
on optimal transportation,
and developed in an economic context by Chiappori, McCann, and
Nesheim~\cite{ChiapporiMcCannNesheim10}.
Other examples may be found in the work of Seethoff and Shiflett~\cite{SeethoffShiflett78},
Losert~\cite{Losert82}, Hestir and Williams~\cite{HestirWilliams95},
Gangbo and McCann \cite{GangboMcCann96}, Uckelmann \cite{Uckelmann97},
McCann \cite{McCann99}, and Plakhov \cite{Plakhov04a}.

Imagine the periodic interval $X=Y = \R / 2\pi \Z=[0,2\pi[$ to parameterize a town built on
the boundary of a circular lake,  and let probability measures $\mu$ and $\nu$ represent
the distribution of students and available places in schools, respectively.
Suppose the distribution of students is smooth and non-vanishing but peaks sharply at the
northern end of the lake,  and the distribution of schools is smooth, non-vanishing
and sharply peaked at the southern end of the lake.
If the cost of transporting a student residing at location $\theta \in [0,2\pi]$
to school at location $\phi \in [0,2\pi]$ is presumed to be given in terms of the
angle commuted by $c(\theta,\phi) = 1 - \cos(\theta-\phi)$,  the most effective pairing of
students with places in schools is given by the measure in $\Gamma(\mu,\nu)$
which attains the minimum:
\begin{equation}\label{MKP}
\min_{\gamma \in \Gamma(\mu,\nu)} \int_{X \times Y} c(\theta,\phi) \; d\gamma(\theta,\phi).
\end{equation}

According to results of Gangbo and McCann \cite{GangboMcCann00}
which are generalized in Theorem \ref{T:unique},
this minimizer is unique, and its support is contained in the union of
the graphs of two maps ${\mathbf{t^\pm}}: X \longrightarrow Y$. A schematic
illustration is
given in Figure \ref{fig.extreme_doubly_stochastic}, where the restriction of
the support to the subsets marked by $\pm$ on the flat torus $X \times Y$ represent
$graph(\mathbf{t^+})$ and  $graph(\mathbf{t^-})$ respectively. The dotted lines mark
$\phi - \theta = \pm \frac{\pi}{2}, \pm \frac{3\pi}{2}$.
The necessary positivity of $\gamma[ J_{X} \times J_{Y1}]>0$
in this picture may be explained by observing that although
it is cost-effective for all students to attend a school where they live,  this is
incompatible with the concentration of students at the north end of the lake,  and
of schools at the south end.  Once this imbalance is corrected by sending a sufficient
number of northern students to southern schools by the map $\mathbf{t^-}$,
the remaining students can be assigned to school near their home using the map
$\mathbf{t^+}$.
Continuity of both of these maps is established in \cite{GangboMcCann00}
and further quantified by McCann and Sosio \cite{McCannSosio10p},
and McCann, Pass and Warren \cite{McCannPassWarren10p}.
Periodicity of graphs on the flat torus can be used to represent the support as a
numbered limb system in more than one way; see
Figure \ref{fig.example_numbered_limb}, which exploits the fact that the support
of $\gamma$ in Figure \ref{fig.extreme_doubly_stochastic} intersects
$X \times J_{Y2}$ in a graph and $X \times \left(Y - J_{Y1}\right)$
in an anti-graph.

Chiappori, Nesheim and McCann \cite{ChiapporiMcCannNesheim10}
called the uniqueness hypothesis limiting the number of critical points
to at most one maximum and at most one minimum in (\ref{subtwist})
the {\em subtwist} condition.
{Although it is satisfied in the example above,  it is an unfortunate fact
that the subtwist condition cannot be satisfied
by any smooth function $c(\theta,\phi)$ on a product of manifolds $X \times Y$
with more complicated Morse structures than the sphere.  It is an interesting
open problem to find a criterion on a smooth cost $c(\theta,\phi)$ on
$X=Y = \R^2/\Z^2$ which guarantees uniqueness of the minimum (\ref{MKP}) for all
smooth densities $\mu$ and $\nu$ on the torus.
Although we expect such costs to be generic,  not a single example of such a cost
is known to us.  Hestir and Williams criteria for extremality seems likely to remain
relevant to such questions, and it is natural to conjecture that the complexity of the
Morse structure of the manifold $X$ plays a role in determining the required number
of limbs in the system.}

\begin{figure}[h]
\psfragscanon
\centering
\psfrag{o}{$o$}
\epsfig{file=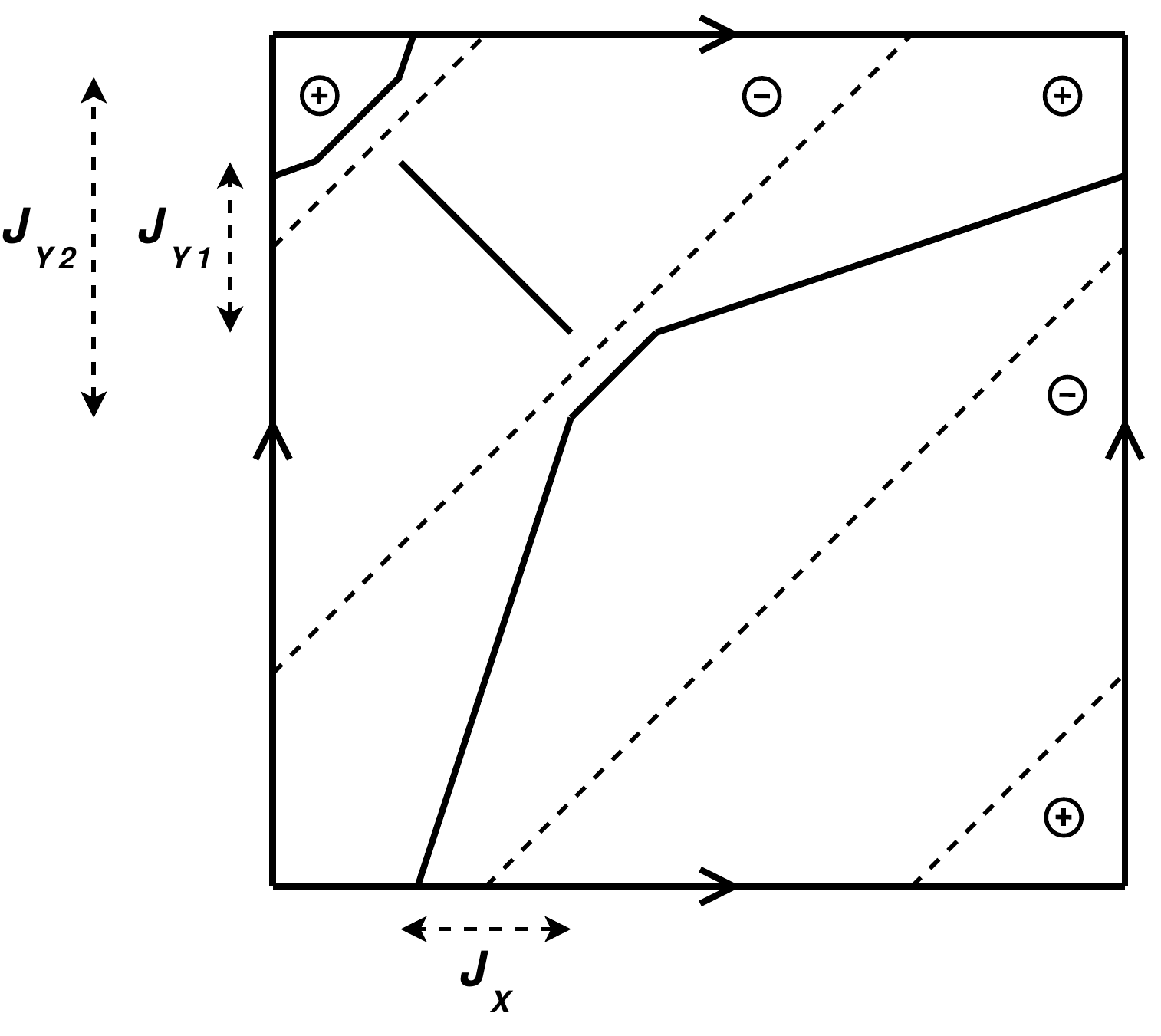, height=6cm}
\caption[An extreme doubly stochastic measure]
{\label{fig.extreme_doubly_stochastic}  \centering Schematic support of
the optimal measure from the example.}
\end{figure}

\begin{figure}[h]
\psfragscanon
\centering
\psfrag{o}{$o$}
\epsfig{file=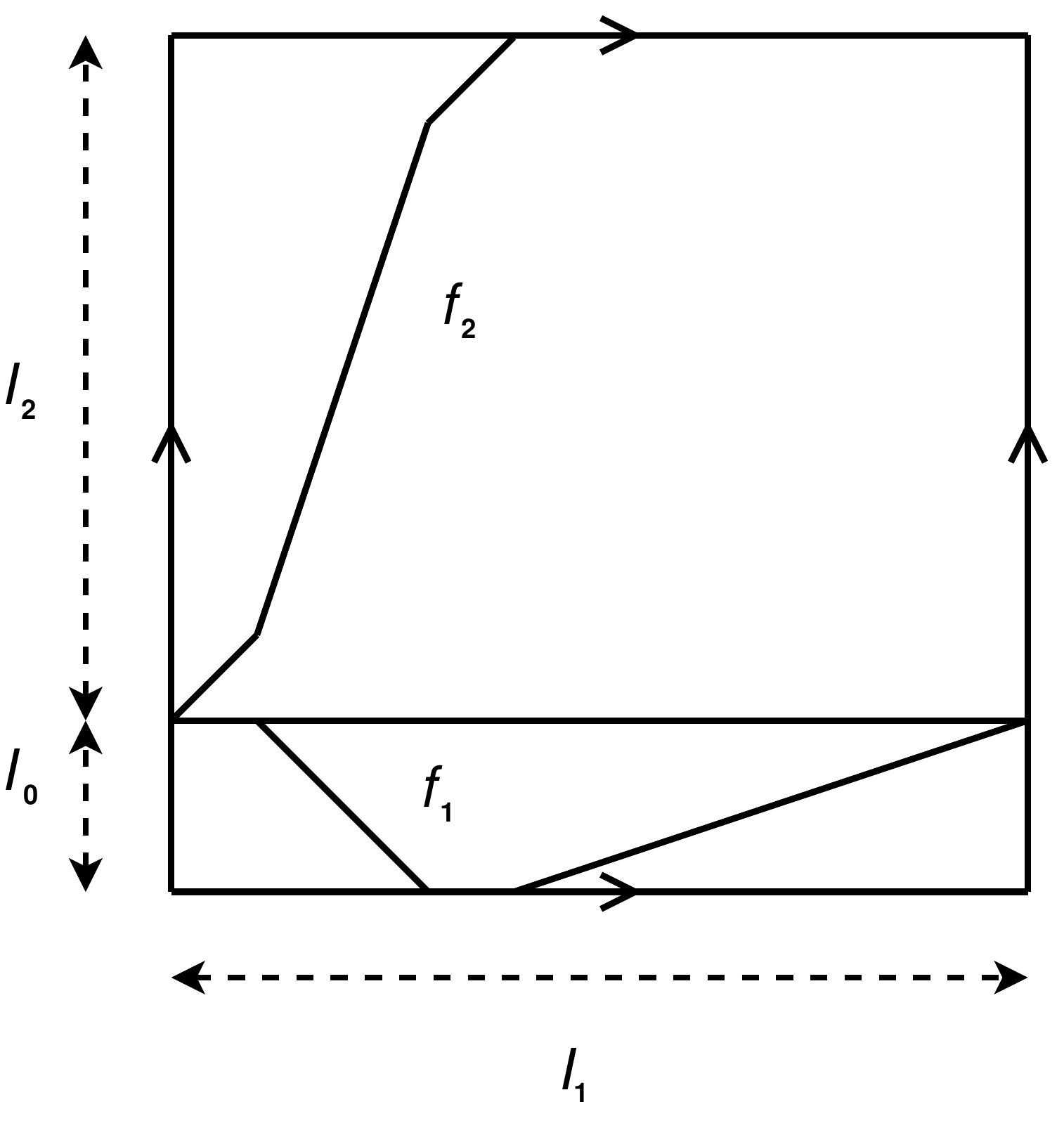, height=6cm}
\epsfig{file=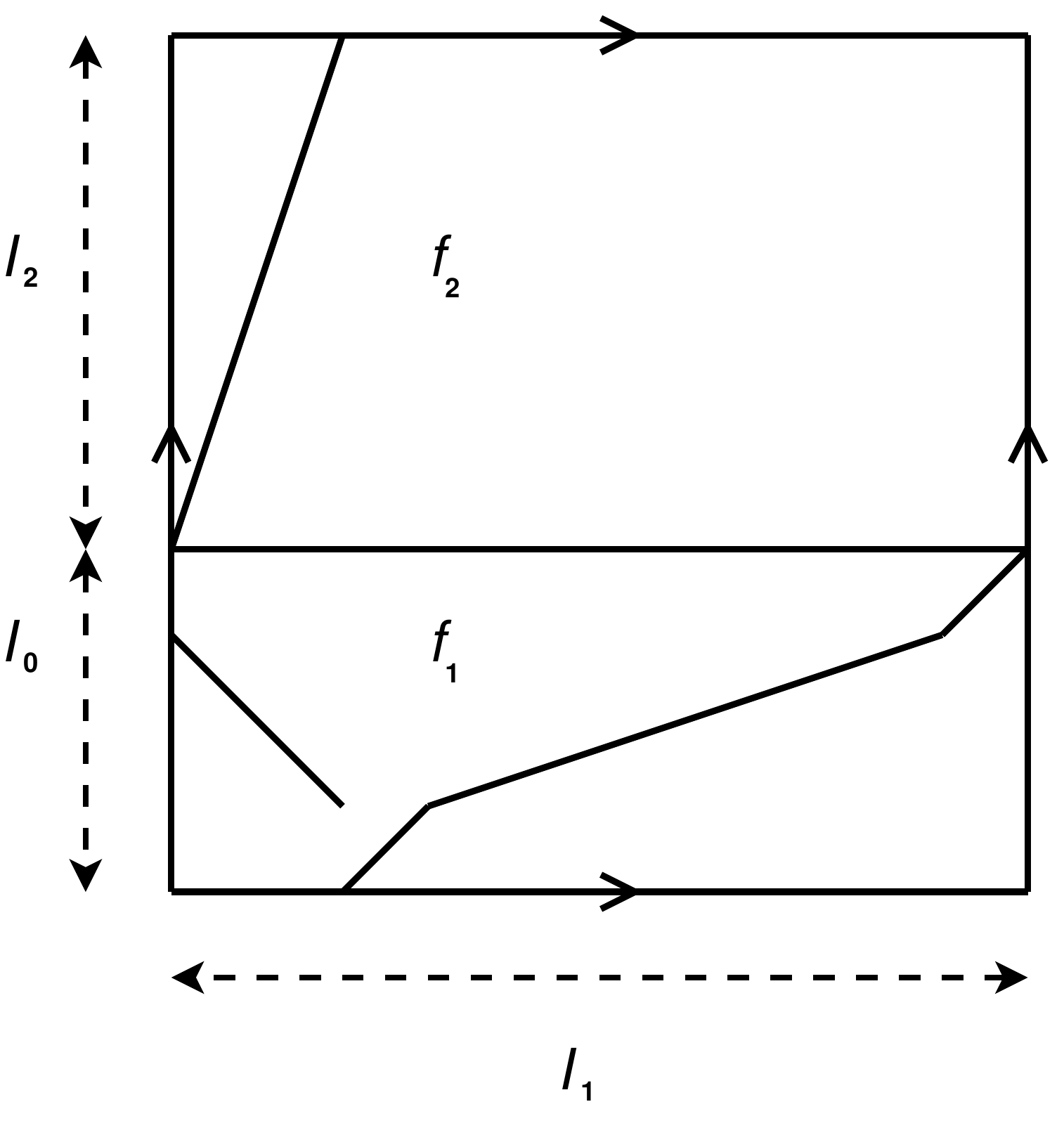, height=6cm}
\caption[Numbered limb system]
{\label{fig.example_numbered_limb}  \centering Two different numbered limb systems which represent Figure \ref{fig.extreme_doubly_stochastic}.}
\end{figure}





\section{Epilog}
The connections of optimal transportation to geometry and curvature ---
sectional \cite{KimMcCann07p} \cite{Loeper09},
Ricci  \cite{CorderoMcCannSchmuckenschlager01} \cite{Lott09} \cite{LottVillani09}
\cite{McCannTopping10} \cite{OttoVillani00} \cite{Sturm06ab} \cite{Villani09}, and
mean \cite{KimMcCannWarren09p} --- have become abundantly clear in recent years.
Connections to differential topology remained largely unsuspected.  The results
reviewed above highlight the delicacy of identifying the extremality of a doubly
stochastic measure from its support,  and the role played by critical points of the
transportation cost \eqref{subtwist} in guaranteeing the uniqueness of the
extremal measure $\gamma \in \Gamma(\mu,\nu)$ which solves a Kantorovich transportation problem
\eqref{MKP} set on the ball or sphere $X$.  When the sources $\mu$ are continuously distributed,
the topology of the landscape $X$ limits the support of
$\gamma$ to lie on a graph in the case of a ball,  and a numbered limb system
with two limbs in the case of a sphere.  This characterization is dimension independent.
For landscapes with more complicated topology,  not a single example of a cost function
$c \in C^1(X \times Y)$ is known to guarantee uniqueness of
optimal measure for all continuous densities $\mu$ and $\nu$ ---
nor is anything known about the support of $\gamma$ beyond its numbered limb system structure
and the local rectifiability determined by the rank of the cost \cite{McCannPassWarren10p}
\cite{Pass10p}.


\end{document}